\newtheorem{theorem}{Theorem}[section]
\newtheorem{lemma}[theorem]{Lemma}
\newtheorem{proposition}[theorem]{Proposition}
\theoremstyle{definition}
\newtheorem{definition}[theorem]{Definition}
\theoremstyle{remark}
\newtheorem{example}[theorem]{Example}
\newtheorem{remark}[theorem]{Remark}
\newtheorem{note}[theorem]{Note}
\numberwithin{equation}{section}
\newcommand{\prth}[1]{\left(#1\right)}
\newcommand{\lie}[1]{\left[#1\right]}
\newcommand{\set}[1]{\left\{#1\right\}}
\newcommand{\NN}{\mathbb N}
\newcommand{\RR}{\mathbb R}
\newcommand{\R}{\mathbb R}
\newcommand{\tensor}{\otimes}
\newcommand{\To}{\longrightarrow}
\newcommand{\dd}{d}
\newcommand{\dx}{\dd x}
\newcommand{\dmx}{\dd^mx}
\newcommand{\dmxi}{\dd^{m-1}x_i}
\newcommand{\dy}{\dd y}
\newcommand{\du}{\dd u}
\newcommand{\hh}{\mathbf{h}}
\newcommand{\HH}{\mathbf{H}}
\newcommand{\pr}{\mathit{pr}}
\renewcommand{\and}{\quad\textrm{and}\quad}
\newcommand{\cf}{\emph{cf.}}
\newcommand{\etal}{\emph{et~al.}}
\newcommand{\secref}[1]{\S\ref{#1}}
\begin{document}
% TITLE ----------------------------------------------------------
\title{Unambiguous formalism for higher-order Lagrangian field theories}
\author[C. M. Campos]{Cédric M. Campos}
\address{Instituto de Ciencias Matem\'aticas\\CSIC-UAM-UC3M-UCM\\Serrano 123\\28006 Madrid\\Madrid (Spain)}
\email[C. M. Campos]{cedricmc@imaff.cfmac.csic.es}
\author[M. de León]{Manuel de León}
\email[M. de León]{mdeleon@imaff.cfmac.csic.es}
\author[D. Martín de Diego]{David Martín de Diego}
\email[D. Martín de Diego]{d.martin@imaff.cfmac.csic.es}
\author[J. Vankerschaver]{Joris Vankerschaver}
\address{Control and Dynamical Systems, California Institute of Technology\\California (USA) \\ Department of Mathematical Physics and Astronomy \\ Ghent University \\ Krijgslaan 281, B-9000 Ghent (Belgium)}
\email[J. Vankerschaver]{jv@caltech.edu}
\thanks{Plublished at J. Phys. A: Math. Theor. 42 (2009) 475207, doi:10.1088/1751-8113/42/47/475207}
\subjclass[2000]{Primary 70S05, Secondary 70H50, 53C80, 55R10}
\keywords{higher order field theory, Euler-Lagrange equations, multisymplectic form}
\date{November 9th, 2009}%\today
%\dedicatory{}
%\commby{}
% ABSTRACT -------------------------------------------------------
\begin{abstract}
The aim of this paper is to propose an unambiguous intrinsic formalism for higher-order field theories which avoids the arbitrariness in the generalization of the conventional description of field theories, and implies the existence of different Cartan forms and Legendre transformations. We propose a differential-geometric setting for the dynamics of a higher-order field theory, based on the Skinner and Rusk formalism for mechanics. This approach incorporates aspects of both, the Lagrangian and the Hamiltonian description, since the field equations are formulated using the Lagrangian on a higher-order jet bundle and the canonical multisymplectic form on its affine dual. As both of these objects are uniquely defined, the Skinner-Rusk approach has the advantage that it does not suffer from the arbitrariness in conventional descriptions. The result is that we obtain a unique and  global intrinsic version of the Euler-Lagrange equations for higher-order field theories. Several examples illustrate our construction.
\end{abstract}
\maketitle
% ----------------------------------------------------------------

% INTRODUCTION ---------------------------------------------------
\section{Introduction} \label{sec.introduction}
During the last decades of the past century, there have been different studies and attempts to define in a global and intrinsic way the higher-order calculus of variations in several independent variables. The standard geometric picture starts with a Lagrangian function  $L: J^k\pi\to \RR$
where $J^k\pi$ is the $k$th-order jet bundle of a fiber bundle $\pi: E\rightarrow M$. The main objectives are to describe the associated Euler-Lagrange equations for sections of the fiber bundle, to derive Cartan forms for use in intrinsic versions of the above equations, and to construct adequate Legendre maps which permit to write the equations in the Hamiltonian side (see, for instance,  \cite{AldAzc78a,AldAzc80a,Ddck84a,Ddck84b,GrcMnz90,Gty91,HrkKlr83,Krpk83,LeonRdrs87b,LeonRdrs87a,SndCrmp90} for further information).

The situation is well established for the case of one independent variable (higher order mechanics) and for the case of first order calculus of variations \cite{Grc73,GchtMngrSrdn97,GldStrn73,Hrmn73}. In this last situation, the typical expression  of the Cartan form associated in classical mechanics to a Lagrangian $L: J^1\pi\to \RR$ may be written as $S^*(dL)+Ldt$, where $S^*$ is the adjoint of the vertical endomorphism acting on 1-forms. In order to generalize this concept to higher order field theories, one needs to define a mapping from 1-forms (the differential of $L$) to $m$-forms and to incorporate in a global way the higher order derivatives.
This is one of the reasons for the  degree of arbitrariness in the definition of Cartan forms for Lagrangian functions $L: J^k\pi\to \RR$, if $k>1$ and $\dim M>1$. In other words, there will be different Cartan forms which carry out the same function in order to define an intrinsic formulation of Euler-Lagrange equations. The main reason of this problem is the commutativity of repeated partial differentiation. Therefore, the Cartan form is unique if (and only if) either $k$ or $m$  equals one.

In the literature, we find different approaches to fix the Cartan form for higher order field theories. A direct attempt is the approach by Aldaya and Azc\'arraga \cite{AldAzc78a,AldAzc80a}. Another point of view is that by Arens \cite{Arns81}, which consists of injecting the jet bundle $J^k\pi$ to an appropriate first-order jet bundle by the introduction of a great number of variables into the theory and Lagrange multipliers. From a more geometrical point of view, Garc\'ia and Mu\~noz described a method of constructing global Poincar\'e-Cartan forms in the higher order calculus of variations in fibered spaces by means of a linear connections (see \cite{GrcMnz82,GrcMnz90}).  In particular they show that the Cartan forms depend on the choice of two connections, a linear connection on the base $M$ and  a linear connection on the vertical bundle $V\pi$. Later, Crampin and Saunders \cite{SndCrmp90} proposed the use of an operator analogous to the almost tangent structure canonically defined on the tangent bundle of a given configuration manifold $M$ for the construction of global Poincar\'e-Cartan forms; this operator depends on the chosen volume form on the base.

In our paper, we propose an alternative way, avoiding the use of additional structures, working only with intrinsic objects from both the Lagrangian and Hamiltonian sides. This formalism is strongly based on the one developed by Skinner and Rusk \cite{Skn83,SknRsk83a,SknRsk83b}. In order to deal with singular Lagrangian systems, Skinner and Rusk construct a Hamiltonian system on the Whitney sum $TQ \oplus T^\ast Q$ of the tangent and cotangent bundles of the configuration manifold $Q$. The advantage of their approach lies on the fact that the second order condition of the dynamics is automatically satisfied. This does not happen in the Lagrangian side of the Gotay and Nester formulation, where the second-order condition problem has to be considered after the implementation of the constraint algorithm (see \cite{GtyNstr79,GtyNstr80,GtyNstrHnds78}), besides other formalisms which include the second-order condition from the very beginning (see \cite{Crny90,CrnyLpzRmn87}).

For higher-order field theories, we start with a Lagrangian function defined on $J^k\pi$. We consider the fibration $\pi_{W_0,M}:W_0\To M$, where $W_0=J^k\pi\times_{J^{k-1}\pi}\Lambda^m_2(J^{k-1}\pi)$ is a fibered product, the velocity-momentum space. On $W_0$ we construct a (pre-)multisymplectic form by pulling back the canonical multisymplectic form on $\Lambda^m_2(J^{k-1}\pi)$, and we define a convenient Hamiltonian from a natural canonical pairing and the given Lagrangian function. The solutions of the field equations are viewed as integral sections of Ehresmann connections in the fibration $\pi_{W_0,M}:W_0\To M$. In this space we obtain a global, intrinsic  and unique expression for a Cartan type equation for the Euler-Lagrange equations for higher-order field theories. Additionally, we obtain a resultant constraint algorithm. Our scheme is applied to several examples to illustrate our method.

Apart from the lack of ambiguity inherent in our construction, we emphasize that our formalism can be easily extended to the case of higher-order field theories with constraints and optimal control problems for partial differential equations. In this way, we obtain a unified, geometric description of both types of systems, with possible future applications in the theory of symmetry reduction and the construction of numerical methods preserving geometric structure (see \cite{LmkRch04}). This will be the topic of future research.

While finalizing this paper, we found out about the work of L. Vitagliano \cite{Vtgl09} who independently used the unified formalism framework to study higher-order field theories, using techniques from secondary calculus.

Throughout the paper, lower case Latin (resp. Greek) letters will usually denote indexes that range between $1$ and $m$ (resp. $1$ and $n$). Capital Latin letters will usually denote multi-indexes whose length ranges between $0$ and $k$. In particular, in section \secref{sec.jet.bundles} and all later sections, $I$ and $J$ will usually denote multi-indexes whose length goes from $0$ to $k-1$ and $0$ to $k$, respectively; and $K$ will denote multi-indexes whose length is equal to $k$. The Einstein notation for repeated indexes and multi-indexes is used but, for clarity, in some cases the summation for multi-indexes will be indicated.

% JET BUNDLES ----------------------------------------------------
\section{Jet Bundles} \label{sec.jet.bundles}
Let $(E,\pi,M)$ be a fiber bundle whose base space $M$ is an orientable differentiable manifold of dimension $m$, and whose fibers have dimension $n$, thus $E$ is $(m+n)$-dimensional. Adapted coordinated systems will be of the form $(x^i,u^\alpha)$, where $(x^i)$ is a local coordinate system in $M$ and $(u^\alpha)$ denotes fiber coordinates. We fix a volume form $\eta$ on the base manifold $M$. For a compatible chart $(x^i)$ with respect to the volume form, $\eta$ is written $\dmx=\dx^1\wedge\dots\wedge\dx^m$, and we will write $\dmxi$ for the contraction $i_{\partial/\partial x^i}\dmx$ ($\dd^{m-2}x_{ij}=i_{\partial/\partial x^j}\dmxi$ and so on).

Given a point $p\in M$, let $\phi,\psi:M\To E$ be two smooth local sections around $p$. We say that $\phi$ and $\psi$ are \emph{$k$-equivalent} at $p$ (with $k\geq1$) if the sections and all their partial derivatives until order $k$ coincide at $p$, that is, if
\[ \phi(p)=\psi(p)\textrm{ and } \left.\frac{\partial^k\phi^\alpha}{\partial x^{i_1}\cdots\partial x^{i_k}}\right|_p=\left.\frac{\partial^k\psi^\alpha}{\partial x^{i_1}\cdots\partial x^{i_k}}\right|_p, \]
for all $1\leq\alpha\leq n$, $1\leq i_j\leq m$, $1\leq j\leq k$. Note that this is independent of the chosen coordinate system (adapted or not) and, therefore, to be $k$-equivalent is an equivalence relation (see \cite{LeonRdrs85,LeonRdrs89,Snd89}, for more details).

\begin{definition} \label{def.jet}
Let $(E,\pi,M)$ be a fiber bundle and $p\in M$. Given a smooth local section $\phi\in\Gamma_p(\pi)$, the equivalence class of $k$-equivalent smooth local sections (with $k\geq1$) around $p$ that contains $\phi$ is called the \emph{$k$-jet of $\phi$ at $p$} and is denoted $j^k_p\phi$. The set of all the $k$-jets of local sections, that is,
\[ \set{j^k_p\phi\ :\ p\in M, \phi\in\Gamma_p(\pi)}, \]
is called the \emph{$k$-th jet manifold of $\pi$} and denoted $J^k\pi$.
\end{definition}

These sets have interesting structures and relations between them, but before we present them, we will introduce a particular multi-index notation.

\begin{note}[The multi-index notation, \cite{Snd89}]
Given a function $f:\RR^m\To\RR$, its partial derivatives are denoted by
\[ f_{i_1i_2\cdots i_k} = \frac{\partial^kf}{\partial x_{i_1}\partial x_{i_2}\cdots\partial x_{i_k}}. \]
Since all the functions that we consider are smooth enough, their crossed derivatives coincide. Thus, the order in which the derivatives are taken is not important, but the number of times with respect to each variable.

Another notation to denote partial derivatives is defined through multi-indexes. A multi-index $I$ will be an $m$-tuple of non-negative integers. The $i$-th component of $I$ is denoted $I(i)$. Addition and substraction of multi-indexes are defined componentwise (whenever the result is still a multi-index), $(I\pm J)(i)=I(i)\pm J(i)$. The length of $I$ is the sum $|I|=\sum_iI(i)$, and its factorial $I!=\Pi_iI(i)!$. In particular, $1_i$ will be the multi-index that is zero everywhere except at the $i$-th component which is equal to 1.

Keeping in mind the above notations, we will denote the partial derivatives of a function $f:\RR^m\To\RR$ by:
\[ f_I = \frac{\partial^{|I|}f}{\partial x^I} = \frac{\partial^{I(1)+I(2)+\dots+I(m)}f}{\partial x_1^{I(1)}\partial x_2^{I(2)}\cdots\partial x_m^{I(m)}}. \]
Thus, given a multi-index $I$, $I(i)$ denotes the number of times the function is differentiated with respect to the $i$-th component. The former notation should not be confused with the latter one. For instance, the third order partial derivative $\frac{\partial^3f}{\partial x_2\partial x_3\partial x_2}$ (with $f:\RR^3\To\RR$) is denoted $f_{232}$ and $f_{(0,2,1)}$ respectively.
\end{note}

Let $(E,\pi,M)$ be a fiber bundle as before. An adapted coordinate system $(x^i,u^\alpha)$ on the total space $E$ induces adapted coordinates $(x^i,u^\alpha_I)$ (with $0\leq|I|\leq k$) on the $k$-jet manifold $J^k\pi$ given by:
\[ u^\alpha_I(j^k_p\phi) = \left.\frac{\partial^{|I|}\phi^\alpha}{\partial x^I}\right|_p, \]
from where we deduce that the dimension of $J^k\pi$ is
\[ \dim J^k\pi = m + n\cdot\sum_{l=0}^k\binom{m-1+l}{m-1}. \]
It is readily seen that $(J^k\pi,\pi_k,M)$ is a fiber bundle, where
\[ \pi_k(j^k_p\phi) = p  \quad  (\textrm{in coordinates }\pi_k(x^i,u^\alpha_I) = (x^i)). \]
Note that any local section $\phi\in\Gamma_p(\pi)$ can be lifted to a local section in $\Gamma_p(\pi_k)$ defining its lift by (see Diagram \ref{fig.jet.chain}):
\[ (j^k\phi)(p) = j^k_p\phi. \]
It is also clear that the $k$-jet manifold $J^k\pi$ fibers over the lower order $l$-jet manifolds $J^l\pi$ (see Diagram \ref{fig.jet.chain}), with $0\leq l<k$, where by convention $J^0\pi=E$ and where the projections are given by:
\[ \pi_{k,l}(j^k_p\phi) = j^l_p\phi  \quad  (\textrm{in coordinates }\pi_{k,l}(x^i,u^\alpha_I) = (x^i,u^\alpha_J), \textrm{ with }0\leq|I|\leq k,0\leq|J|\leq l). \]
\begin{figure}[h]
\[\xymatrix{
  E \ar[dd]^\pi & J^1\pi \ar[l]_{\pi_{1,0}} \ar[ddl]^{\pi_1} & J^2\pi \ar[l]_{\pi_{2,1}} \ar[ddll]^{\pi_2} & \cdots \ar[l] & J^k\pi \ar[l] \ar[ddllll]^{\pi_k} \\ \\
  M \ar@/^1.2pc/[uu]^\phi \ar@/_2pc/[uurrrr]_{j^k\phi}
  }\]
\caption{Chain of jets} \label{fig.jet.chain}
\end{figure}
In particular, $(J^k\pi,\pi_{k,k-1},J^{k-1}\pi)$ is an affine fiber bundle (see Cariñena \etal\ \cite{CrnyCrmpIbrt91} for the case $k=1$, or Saunders \cite{Snd89} for the general case), which is modeled on the vector bundle
\[ \pi_{k-1}^*(S^kT^*M)\otimes\pi_{k-1,0}^*(V\pi), \]
where $S^kT^*M$ is the space of symmetric $k$-tensors on $M$ and $V\pi$ is the vertical fiber bundle on $\pi$. Thus, taking repeated jets, $(J^1\pi_k,(\pi_k)_{1,0},J^k\pi)$ is also an affine fiber bundle. Furthermore, $J^{k+1}\pi$ can be naturally embedded into $J^1\pi_k$ (see Diagram \ref{fig.iterated.jet}). The inclusion map $i_{1,k}:J^{k+1}\pi\hookrightarrow J^1\pi_k$ is given by
\[ i_{1,k}(j^{k+1}_p\phi) = j^1_p(j^k\phi). \]
If we consider fiber coordinates $(x^i,u^\alpha_I,u^\alpha_{I;i})$ on $J^1\pi_k$ (with $0\leq|I|\leq k$), then $i_{1,k}(J^{k+1}\pi)$ is given by the equations
\[ \left\{\begin{array}{ll}u^\alpha_{I;i} = u^\alpha_{I+1_i},& \textrm{for } 0\leq|I|\leq k-1;\textrm{ and}\\ u^\alpha_{I;i} = u^\alpha_{J;j},& \textrm{when } |I|=|J|=k \textrm{ and } I+1_i=J+1_j.\end{array}\right. \]
\begin{figure}[h]
\[\xymatrix{
                                                      & J^{k+1}\pi \ar[d]^{\pi_{k+1,k}} \ar[dl]_{i_{1,k}} \\
  J^1\pi_k \ar[r]^{\ (\pi_k)_{1,0}} \ar[dr]_{(\pi_k)_1} & J^k\pi     \ar[d]^{\pi_k} \\
                                                      & M
  }\]
\caption{Iterated jet} \label{fig.iterated.jet}
\end{figure}

As we have said, $\pi_{k,k-1}:J^k\pi\To J^{k-1}\pi$ is an affine bundle, so we may consider its extended dual affine bundle $\pi_{k,k-1}^\dag:J^k\pi^\dag\To J^{k-1}\pi$ and its dual affine bundle $\pi_{k,k-1}^*:J^k\pi^*\To J^{k-1}\pi$. The extended dual bundle $(J^k\pi^\dag,\pi_{k,k-1}^\dag,J^{k-1}\pi)$ is a fiber bundle whose fibers consist of affine maps of the corresponding fibers of the affine bundle $(J^k\pi,\pi_{k,k-1}, J^{k-1}\pi)$. In its turn, $(J^k\pi^*,\pi_{k,k-1}^*,J^{k-1}\pi)$ is a fiber bundle whose fibers consist of classes of affine maps of the corresponding fibers of the affine bundle $(J^k\pi,\pi_{k,k-1}, J^{k-1}\pi)$, which differ by a constant. It can be shown that there exist canonical isomorphisms such that $J^k\pi^\dag\approx\Lambda^m_2(J^{k-1}\pi)$ and $J^k\pi^*\approx\Lambda^m_2(J^{k-1}\pi)/\Lambda^m_1(J^{k-1}\pi)$, where $\Lambda^m_r(J^{k-1}\pi)$ is the bundle of those $m$-forms over $J^{k-1}\pi$ that are annihilated when $r$ of their arguments are vertical with respect to $\pi_{k-1}:J^{k-1}\pi\To M$. Locally, the elements of $\Lambda^m_2(J^{k-1}\pi)$ are of the form
\[ p\,\dmx + p^{I,i}_\alpha\du^\alpha_I\wedge\dmxi, \]
where $0\leq|I|\leq k-1$. Thus, adapted coordinates on $J^{k-1}\pi$ induce coordinates on $\Lambda^m_2(J^{k-1}\pi)$ and $\Lambda^m_2(J^{k-1}\pi)/\Lambda^m_1(J^{k-1}\pi)$ of the form
\[ (x^i,u^\alpha_I,p,p^{I,i}_\alpha) \textrm{ and } (x^i,u^\alpha_I,p^{I,i}_\alpha), \]
respectively.

While $J^k\pi^\dag$ is naturally paired with $J^k\pi$, $\Lambda^m_2(J^{k-1}\pi)$ has a canonical multisymplectic form (see \cite{CntrIbrtLeon96,CntrIbrtLeon99,CrnyCrmpIbrt91}).
The pairing between the elements of $J^k\pi$ and $\Lambda^m_2(J^{k-1}\pi)$ is given by
\begin{equation} \label{eq.natural.pairing}
\Phi(j^k_x\phi,\omega_{j^{k-1}_x\phi})=a(x), \textrm{ such that } a(x)\eta(x)=(j^{k-1}\phi
)^*\omega_{j^{k-1}_x\phi};
\end{equation}
which is written in adapted coordinates
\begin{equation} \label{eq.natural.pairing.coordinates}
\Phi(x^i,u^\alpha_I,u^\alpha_K,p^{I,i}_\alpha,p) = p^{I,i}_\alpha u^\alpha_{I+1_i}+p,
\end{equation}
where $|I|=0,\dots,k-1$ and $|K|=k$.
The canonical multisymplectic $(m+1)$-form on $\Lambda^m_2(J^{k-1}\pi)$, which will be denoted $\Omega$, is written in coordinates
\begin{equation} \label{eq.canonical.form}
\Omega = -\dd p\wedge\dmx-\dd p^{I,i}_\alpha\wedge\du^\alpha_I\wedge\dmxi.
\end{equation}

%\[\xymatrix{
%  E \ar[dd]^\pi & J^1\pi \ar[l]_{\pi_{1,0}} \ar[ddl]^{\pi_1} & J^2\pi \ar[l]_{\pi_{2,1}} \ar[ddll]^{\pi_2} & \cdots \ar[l] & J^k\pi \ar[l] \ar[ddllll]^{\pi_k} & J^{k+1}\pi \ar[l]_{\pi_{k+1,k}} \ar[ddl]^{i_{1,k}} \\ \\
%  M & & & & J^1\pi_k \ar[uu]^{(\pi_k)_{1,0}} \ar[llll]^{(\pi_k)_1}
%  }\]

% THE SKINNER-RUSK FORMALISM -------------------------------------
\section[The Skinner-Rusk formalism]{The Skinner-Rusk formalism} \label{sec.s-r.formalism}
The generalization of the Skinner-Rusk formalism to higher order classical field theories will take place in the fibered product
\begin{equation} \label{eq.mixed.space}
W_0=J^k\pi\times_{J^{k-1}\pi}\Lambda^m_2(J^{k-1}\pi).
\end{equation}
The first order case is covered in \cite{LeonMrrMrt03,MrnMnzRmn03}; see also \cite{SknRsk83a,SknRsk83b} for the original treatment by Skinner and Rusk. The projection on the $i$-th factor will be denoted $\pr_i$ (with $i=1,2$) and the projection as fiber bundle over $J^{k-1}\pi$ will be $\pi_{W_0,J^{k-1}\pi}=\pi_{k,k-1}\circ\pr_1$ (see Diagram \ref{fig.dynamical.framework}). On $W_0$, adapted coordinate systems are of the form $(x^i,u^\alpha_I,u^\alpha_K,p^{I,i}_\alpha,p)$, where $|I|=0,\dots,k-1$ and $|K|=k$.

\begin{figure}[h]
\[\xymatrix{
  && W_0 \ar[lld]_{\pr_1} \ar[dd]^{\pi_{W_0,J^{k-1}\pi}} \ar[rrd]^{\pr_2}&&\\
  J^k\pi \ar[rrd]^{\pi_{k,k-1}} && && \Lambda^m_2(J^{k-1}\pi) \ar[lld] \\
  && J^{k-1}\pi \ar[d]^{\pi_{k-1}}&&\\
  && M &&
  }\]
\caption{The Skinner-Rusk framework} \label{fig.dynamical.framework}
\end{figure}

Assume that $L:J^k\pi\To\RR$ is a Lagrangian function. Together with the pairing $\Phi$ (equations \eqref{eq.natural.pairing} and \eqref{eq.natural.pairing.coordinates}), we use this Lagrangian $L$ to define a dynamical function $H_0$ (corresponding to the Hamiltonian) on $W_0$:
\begin{equation} \label{eq.hamiltonian}
H_0 = \Phi - L\circ\pr_1.
\end{equation}

Consider the canonical multisymplectic $(m+1)$-form $\Omega$ on $\Lambda^m_2(J^{k-1}\pi)$ (equation \eqref{eq.canonical.form}), whose pullback to $W_0$ shall be denoted also by $\Omega$. We define on $W_0$ the $(m+1)$-form
\begin{equation} \label{eq.hamiltonian.form}
\Omega_{H_0} = \Omega + \dd H_0\wedge\eta.
\end{equation}
In adapted coordinates
\begin{eqnarray}
H_0&=&p^{I,i}_\alpha u^\alpha_{I+1_i}+p-L(x^i,u^\alpha_I,u^\alpha_K)\\
\label{eq.hamiltonian.form.coordinates}
\Omega_{H_0} &=& -\dd p^{I,i}_\alpha\wedge\du^\alpha_I\wedge\dmxi+\prth{p^{I,i}_\alpha\du^\alpha_{I+1_i}+u^\alpha_{I+1_i}\dd p^{I,i}_\alpha-\frac{\partial L}{\partial u^\alpha_J}\du^\alpha_J}\wedge\dmx,
\end{eqnarray}
where $|I|=0,\dots,k-1$ and $|J|=0,\dots,k$.

% THE SKINNER-RUSK EQUATION --------------------------------------
\subsection{The dynamical equation} \label{sec.srf.dynamical.equation}
We search for an Ehresmann connection $\Gamma$ in the fiber bundle $\pi_{W_0,M}:W_0\To M$ whose horizontal projector be a solution of the \emph{dynamical equation} (see Appendix \ref{sec.app.connections}):
\begin{equation} \label{eq.dynamical.equation}
i_\hh\Omega_{H_0}=(m-1)\Omega_{H_0}.
\end{equation}
We will show that such a solution does not exist on the whole $W_0$. Thus, we need to restrict to the space on where such a solution exists, that is on
\begin{equation} \label{eq.w1.def}
\begin{array}{rcl}
W_1 &=& \{ w\in W_0\ /\ \exists\hh_w:T_wW_0\To T_wW_0\textrm{ linear such that }\hh_w^2=\hh_w,\\
      && \ker\hh_w=(V\pi_{W_0,M})_w,\ i_{\hh_w}\Omega_{H_0}(w)=(m-1)\Omega_{H_0}(w) \}.
\end{array}
\end{equation}

\begin{remark}
Equation \eqref{eq.dynamical.equation} is a generalization of equations that usually appear in first order field theories. In this particular case, from a given  Lagrangian function $L:J^1\pi\to\RR$ we may construct a unique  $(n+1)$-form $\Omega_L$ (the Poincar\'e-Cartan (n+1)-form). Hence, we have a geometrical characterization of the Euler-Lagrange equations for $L$ as follows. Let $\Gamma$ be an Ehresmann connection in $\pi_{1,0}:J^1\pi\to M$,  with horizontal projector $\hh$. Consider the equation
\begin{equation} \label{eq.dynamical.equation.rmk}
i_\hh\Omega_L = (n-1)\Omega_L .
\end{equation}
If $\hh$ has locally the from
\[ \hh\prth{\frac{\partial}{\partial x^i}} = \frac{\partial}{\partial x^i} + A^\alpha_i \frac{\partial}{\partial u^\alpha} + A^\alpha_{ji} \frac{\partial}{\partial u^\alpha_j}, \]
then a direct computation shows that equation \eqref{eq.dynamical.equation.rmk} holds if and only if
\begin{eqnarray}
(A^\alpha_i - u^\alpha_i)\prth{\frac{\partial^2 L}{\partial u^\alpha_i \partial u^\beta_j}} = 0, \label{uno}\\
\frac{\partial L}{\partial u^\alpha} - \frac{\partial^2 L}{\partial x^i \partial u^\alpha_i} - A^\beta_i \frac{\partial^2L}{\partial u^\beta \partial u^\alpha_i} - A^\beta_{ji} \frac{\partial^2 L}{\partial u^\beta_j \partial u^\alpha_i} + (A^\beta_j - u^\beta_j) \frac{\partial^2 L}{\partial u^\alpha \partial u^\beta_j} = 0. \label{dos}
\end{eqnarray}
(see \cite{LeonMrnMrr95}). If the lagrangian $L$ is regular, then Eq. \eqref{uno} implies that $A^\alpha_i = u^\alpha_i$ and therefore Eq. \eqref{dos} becomes
\begin{equation} \label{tres}
\frac{\partial L}{\partial u^\alpha} - \frac{\partial^2 L}{\partial x^i \partial u^\alpha_i} - A^\beta_i \frac{\partial^2L}{\partial u^\beta \partial u^\alpha_i} - A^\beta_{ji} \frac{\partial^2 L}{\partial u^\beta_j \partial u^\alpha_i} = 0.
\end{equation}
Now, if $\sigma(x^i)=(x^i, \sigma^\alpha(x), \sigma^\alpha_i(x))$ is an integral section of $\hh$ we would have
\[ u^\alpha_i=\frac{\partial\sigma^\alpha}{\partial x^i} \quad\textrm{and}\quad A^\alpha_{ij}=\frac{\partial\sigma^\alpha_i}{\partial x^j}, \]
which proves that Eq. \eqref{tres} is nothing but the Euler-Lagrange equations for $L$.

We may think Equation \eqref{eq.dynamical.equation} as a generalization of equation \ref{eq.dynamical.equation.rmk} giving  the Euler-Lagrange equations for higher-order field theories in a univocal way, as we will see.
\end{remark}

In a local chart $(x^i,u^\alpha_J,p^{I,i}_\alpha,p)$ of $W_0$, a horizontal projector $\hh$ must have the expression:
\begin{equation} \label{eq.horizontal.projector}
\hh = \prth{\frac{\partial}{\partial x^j}+A^\alpha_{Jj}\frac{\partial}{\partial u^\alpha_J}+B^{Ii}_{\alpha j}\frac{\partial}{\partial p^{I,i}_\alpha}+C_j\frac{\partial}{\partial p}}\tensor\dd x^j,
\end{equation}
where $|I|=0,\dots,k-1$ and $|J|=0,\dots,k$. We then obtain that
\begin{eqnarray*}
i_\hh\Omega_{H_0}-(m\!-\!1)\Omega_{H_0} &\!=\!&\! \prth{B^{Ii}_{\alpha i}\du^\alpha_I-A^\alpha_{Ii}\dd p^{I,i}_\alpha+p^{I,i}_\alpha\du^\alpha_{I+1_i}+u^\alpha_{I+1_i}\dd p^{I,i}_\alpha-\frac{\partial L}{\partial u^\alpha_J}\du^\alpha_J}\wedge\dmx\\
&\!=\!&\! \left[\prth{B^{\ i}_{\alpha i}-\frac{\partial L}{\partial u^\alpha}}\du^\alpha + \sum_{|I'|=1}^{k-1}\prth{B^{I'i}_{\alpha i}-\frac{\partial L}{\partial u^\alpha_{I'}}}\du^\alpha_{I'} + \sum_{|I|=0}^{k-2}p^{I,i}_\alpha\du^\alpha_{I+1_i}\right.\\
&&\! - \sum_{|K|=k}\frac{\partial L}{\partial u^\alpha_K}\du^\alpha_K + \sum_{|I|=k-1}p^{I,i}_\alpha\du^\alpha_{I+1_i}\\
&&\! \left. + \sum_{|I|=0}^{k-1}\prth{u^\alpha_{I+1_i}-A^\alpha_{Ii}}\dd p^{I,i}_\alpha\right]\wedge\dmx.
\end{eqnarray*}
Equating this to zero and using Lemma \ref{th.multi-index.lower.sum} from Appendix \ref{sec.app.multi-index.properties}, we have that
\begin{eqnarray}
     A^\alpha_{Ii} \!&\!=\!&\! u^\alpha_{I+1_i},\ |I|=0,\dots,k-1,\ i=1,\dots,m;\\
B^{\ j}_{\alpha j} \!&\!=\!&\! \frac{\partial L}{\partial u^\alpha};\\
    p^{I,i}_\alpha \!&\!=\!&\! \frac{I(i)+1}{|I|+1}\prth{\frac{\partial L}{\partial u^\alpha_{I+1_i}} - B^{I+1_ij}_{\alpha j} + Q^{Ii}_\alpha}\!,\ |I|=0,\dots,k-2,\ i=1,\dots,m; \label{eq.horizontal.projector.bs.with.q}\\
    p^{I,i}_\alpha \!&\!=\!&\! \frac{I(i)+1}{|I|+1}\prth{\frac{\partial L}{\partial u^\alpha_{I+1_i}} + Q^{Ii}_\alpha}\!,\ |I|=k-1,\ i=1,\dots,m; \label{eq.w1.with.q}
\end{eqnarray}
where the $Q$'s are arbitrary functions such that
\begin{equation} \label{eq.qs}
\sum_{I+1_i=J}\frac{I(i)+1}{|I|+1}Q^{Ii}_\alpha=0, \textrm{ with } |J|=1,\dots,k.
\end{equation}

\begin{remark} \label{rmk.ambiguity}
The ambiguity in the definition of the Legendre transform, and therefore of the Cartan form, becomes apparent in the equations \eqref{eq.horizontal.projector.bs.with.q} and \eqref{eq.w1.with.q}, as noted by Crampin and Saunders (see \cite{SndCrmp90}). There are too many momentum variables to be related univocally with the velocity counterpart. To fix this, a choice of arbitrary functions $Q$ satisfying \eqref{eq.qs} must be done. The choice may be encoded as an additional geometric structure, like a connection.
\end{remark}

Applying \eqref{eq.qs} to \eqref{eq.horizontal.projector.bs.with.q} and \eqref{eq.w1.with.q}, and using the identity \eqref{eq.multi-index.identity}, we finally obtain the equations
\begin{eqnarray}
               A^\alpha_{Ii} &=& u^\alpha_{I+1_i}, \textrm{ with } |I|=0,\dots,k-1,\ i=1,\dots,m; \label{eq.horizontal.projector.as}\\
          B^{\ j}_{\alpha j} &=& \frac{\partial L}{\partial u^\alpha}; \label{eq.horizontal.projector.b1}\\
\sum_{I+1_i=J}p^{I,i}_\alpha &=& \frac{\partial L}{\partial u^\alpha_J} - B^{Jj}_{\alpha j}, \textrm{ with } |J|=1,\dots,k-1; \label{eq.horizontal.projector.bs}\\
\sum_{I+1_i=K}p^{I,i}_\alpha &=& \frac{\partial L}{\partial u^\alpha_K}, \textrm{ with } |K|=k. \label{eq.w1}
\end{eqnarray}

Notice that equation \eqref{eq.w1} is the constraint that defines the space $W_1$; and that \eqref{eq.horizontal.projector.as}, \eqref{eq.horizontal.projector.b1} an d\eqref{eq.horizontal.projector.bs} are conditions on coefficients of the horizontal projectors $\hh$. Note also that, for the time being, the $A$'s with greatest order index and the $C$'s remain undetermined, as well as the most part of the $B$'s.
From the definition of $W_1$, we know that for each point $w \in W_1$ there exists a horizontal projector $\hh_w : T_wW_0 \longrightarrow T_w W_0$ satisfying equation \eqref{eq.dynamical.equation}. However, we cannot ensure that such $\hh_w$, for each $w \in W_1$, will take values in $T_wW_1$. Therefore, we impose  the natural regularizing condition $\hh_w(T_wW_0)\subset T_wW_1$, $\forall w\in W_1$. This latter condition is equivalent to having
\[ \hh\prth{\frac{\partial}{\partial x^j}}\prth{\sum_{I+1_i=K}p^{I,i}_\alpha - \frac{\partial L}{\partial u^\alpha_K}} = 0, \]
which in turn is equivalent (using \eqref{eq.horizontal.projector} and \eqref{eq.horizontal.projector.as}) to
\begin{equation} \label{eq.horizontal.projector.ak}
\sum_{I+1_i=K}B^{Ii}_{\alpha j} = \frac{\partial^2L}{\partial x^j\partial u^\alpha_K} + \sum_{|I|=0}^{k-1}u^\beta_{I+1_j}\frac{\partial^2L}{\partial u^\beta_I\partial u^\alpha_K} + \sum_{|J|=k}A^\beta_{Jj}\frac{\partial^2L}{\partial u^\beta_J\partial u^\alpha_K},
\end{equation}
with $|K|=k$. Thus, if the matrix of second order partial derivatives of $L$ with respect to the ``velocities'' of highest order
\begin{equation}\label{eq.regular.lagrangian}
\prth{\frac{\partial^2L}{\partial u^\beta_J\partial u^\alpha_K}}
\end{equation}
is non-degenerate, then the highest order $A$'s are completely determined in terms of the highest order $B$'s. In the sequel, we will say that the Lagrangian $L: J^1\pi \longrightarrow \RR$ is \emph{regular} if, for any system of adapted coordinates the matrix, \eqref{eq.regular.lagrangian} is non-degenerate.

Up to now, no meaning has been assigned to the coordinate $p$. Consider the submanifold $W_2$ of $W_1$ defined by the restriction $H_0=0$. In other words, $W_2$ is locally characterized by the equation
\[ p = L-p^{I,i}_\alpha u^\alpha_{I+1_i}. \]
As before, we cannot ensure that a solution $\hh$ of the dynamical equation \eqref{eq.dynamical.equation} takes values in $TW_2$. We thus impose to $\hh$ the regularizing condition $\hh_w(T_wW_0)\subset T_wW_2$, $\forall w\in W_2$, or equivalently $\hh(\partial/\partial x^j)(H_0)=0$. Therefore, the coefficients of the linear mapping $\hh$ are governed by the equations \eqref{eq.horizontal.projector.as}, \eqref{eq.horizontal.projector.b1}, \eqref{eq.horizontal.projector.bs},  \eqref{eq.horizontal.projector.ak} and in addition
\begin{equation} \label{eq.horizontal.projector.cs}
C_j = \frac{\partial L}{\partial x^j} + A^\alpha_{Jj}\frac{\partial L}{\partial u^\alpha_J} - A^\alpha_{I+1_ij}p^{I,i}_\alpha - B^{Ii}_{\alpha j}u^\alpha_{I+1_i}.
\end{equation}
Note that, thanks to the Lemma \ref{th.multi-index.lower.sum} and equation \eqref{eq.w1}, the terms with $A$'s with multi-index of length $k$ cancel out, and the $A$'s with lower multi-index are already determined. So, in some sense, the $C$'s depend only on the $B$'s.

% DESCRIPTION OF THE SOLUTIONS -----------------------------------
\subsection{Description of the solutions} \label{sec.srf.description}
The relations \eqref{eq.horizontal.projector.bs} (with $|J|=k-1$) and \eqref{eq.horizontal.projector.ak} can be seen as a system of linear equations with respect to the $B$'s. When $k=1$, equation \eqref{eq.horizontal.projector.b1} should be considered instead of equation \eqref{eq.horizontal.projector.bs}. In the following, we are going to suppose that $n=1$, since the dimension of the fibres is irrelevant for our purposes and we may ignore it. The number of $B$'s with order $k-1$ (with multi-index length $k-1$) is given by
\[ \binom{m-1+k-1}{m-1}\cdot m^2 \]
and the number of equations with such $B$'s is
\[ \binom{m-1+k}{m-1}\cdot m+\binom{m-1+k-1}{m-1}. \]
An easy computation shows that the system is overdetermined if and only if $k=1$ or $m=1$ (examples \ref{sec.1st.order.lagrangian} and \ref{sec.mechanical.system}), and completely determined when $k=m=2$. In all other cases the system is underdetermined, but it still has maximal rank.

\begin{proposition} \label{th.maximal.rank}
Suppose that $k\geq2$ and $m\geq2$. Then, the system of linear equations with respect to the $B$'s
\begin{eqnarray}
 \sum_{j=1}^m B^{Jj}_{\ j} &=& \frac{\partial L}{\partial u_J} - \sum_{I+1_i=J}p^{I,i}; \label{eq.linear.system.a}\\
\sum_{I+1_i=K}B^{Ii}_{\ j} &=& \frac{\partial^2L}{\partial x^j\partial u_K} + \sum_{|I|=0}^{k-1}u_{I+1_j}\frac{\partial^2L}{\partial u_I\partial u_K} + \sum_{|J|=k}A_{Jj}\frac{\partial^2L}{\partial u_J\partial u_K}; \label{eq.linear.system.b}
\end{eqnarray}
where $|J|=k-1$, $j=1,\dots,m$ and $|K|=k$, has maximal rank.
\end{proposition}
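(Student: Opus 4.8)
The plan is to notice that the coefficients on the left of \eqref{eq.linear.system.a}--\eqref{eq.linear.system.b} involve neither $L$ nor the coordinates, so the statement is purely about a fixed linear map: it suffices to discard the right-hand sides and show that the \emph{rows} of the coefficient matrix are linearly independent. Combined with the count carried out just above --- the number of equations $\binom{m-1+k}{m-1}m+\binom{m-1+k-1}{m-1}$ is at most the number of unknowns $\binom{m-1+k-1}{m-1}m^2$ when $k,m\geq 2$, with equality exactly when $k=m=2$ --- independence of the rows is exactly the assertion of maximal rank. Concretely, write $B^{Ii}_{\ j}$ ($|I|=k-1$, $i,j\in\{1,\dots,m\}$) for the unknowns; for $|J|=k-1$ let $\ell_J$ be the linear functional $\sum_j B^{Jj}_{\ j}$ on the left of \eqref{eq.linear.system.a}, and for $|K|=k$ and $j\in\{1,\dots,m\}$ let $r_{K,j}$ be the linear functional $\sum_{I+1_i=K}B^{Ii}_{\ j}$ on the left of \eqref{eq.linear.system.b}. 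I want to show that scalars $\lambda_J$, $\mu_{K,j}$ with $\sum_J\lambda_J\,\ell_J+\sum_{K,j}\mu_{K,j}\,r_{K,j}=0$ must all vanish.

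The key step is to read off the coefficient of a generic unknown $B^{Ia}_{\ c}$ in that combination: $B^{Ia}_{\ c}$ occurs (with coefficient $1$) in $\ell_J$ precisely when $J=I$ and $a=c$, and in $r_{K,j}$ precisely when $K=I+1_a$ and $j=c$. Hence the relation is equivalent to the scalar identities
\[ \lambda_I\,\delta_{ac}+\mu_{I+1_a,\,c}=0,\qquad |I|=k-1,\ a,c\in\{1,\dots,m\}. \]
Taking $a\neq c$ yields $\mu_{I+1_a,c}=0$; since $I+1_a$ runs over exactly the length-$k$ multi-indices $K$ with $K(a)\geq 1$ as $I$ runs over the length-$(k-1)$ multi-indices, this gives $\mu_{K,c}=0$ for every $K$ of length $k$ having a nonzero entry in some slot other than $c$, that is, for every $K\neq k\cdot 1_c$. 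Returning to the identities with $a=c$, i.e. $\lambda_I+\mu_{I+1_c,\,c}=0$: when $I\neq(k-1)\cdot 1_c$ the $\mu$-term has already been shown to vanish, so $\lambda_I=0$. Because $k\geq 2$ the multi-indices $(k-1)\cdot 1_c$ are distinct for distinct $c$, so a given $I$ with $|I|=k-1$ equals $(k-1)\cdot 1_c$ for at most one $c$; and because $m\geq 2$ there is then some $c$ with $I\neq(k-1)\cdot 1_c$, whence $\lambda_I=0$ for all $I$. Substituting $\lambda\equiv 0$ into $\lambda_{(k-1)\cdot1_c}+\mu_{k\cdot1_c,\,c}=0$ finally kills the last surviving coefficients $\mu_{k\cdot1_c,c}$, so the relation was trivial and the rows are independent.

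I do not expect a real obstacle: this is a short duality computation. The only delicate bookkeeping --- and the only place the hypotheses enter --- concerns the extremal multi-indices $k\cdot 1_c$ and $(k-1)\cdot 1_c$, where the diagonal identities ($a=c$) cannot simply be dropped; needing both $k\geq 2$ and $m\geq 2$ there is consistent with the observation just above that $k=1$ or $m=1$ makes the system overdetermined rather than underdetermined.
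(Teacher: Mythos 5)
Your argument is correct, and it reaches the conclusion by a genuinely different mechanism than the paper. The paper works with the \emph{columns} of the coefficient matrix: it selects, by an explicit algorithm, one column per equation --- for a tangency equation indexed by $(j,K)$ it takes a column $(I,i)$ with $I+1_i=K$ and $i\neq j$ whenever such a pair exists (i.e.\ unless $K=k\cdot 1_i$), and for an equation indexed by $J$ it takes a diagonal column $(1,1,J)$ or $(m,m,J)$ depending on whether $J(1)=k-1$ --- and then shows the resulting square submatrix is nonsingular by repeatedly striking out rows and columns containing an isolated $1$ until a permutation matrix remains. You instead prove directly that the \emph{row} functionals $\ell_J$, $r_{K,j}$ are linearly independent, by reading off the coefficient of each unknown $B^{Ia}_{\ c}$ in a putative vanishing combination and solving the resulting identities $\lambda_I\,\delta_{ac}+\mu_{I+1_a,\,c}=0$. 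The combinatorial content is the same in both arguments: the off-diagonal unknowns ($i\neq j$ in the paper's selection, $a\neq c$ in your first step) see only the tangency equations, and the concentrated multi-indices $k\cdot 1_c$ and $(k-1)\cdot 1_c$ are the only delicate cases --- precisely where the paper's algorithm branches and where your hypotheses $k,m\geq 2$ enter. Your version is shorter, dispenses with the pseudo-code and the somewhat informal ``cross out rows and columns'' determinant expansion, and makes the role of the hypotheses explicit; moreover, since row independence by itself forces the number of equations not to exceed the number of unknowns, it does not logically depend on the dimension count quoted beforehand. What the paper's version buys in exchange is an explicit invertible square subsystem, i.e.\ a concrete choice of which $B$'s can be solved for in terms of the remaining free ones, which is of practical use when one actually wants to construct the connection rather than merely know it exists.
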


\begin{proof}
In a first step, we are going to describe how to write the matrix of coefficients. Then, we will select the proper columns of this matrix to obtain a new square matrix of maximal size. We finally shall prove that this matrix has maximal rank.

The matrix of coefficients will be a rectangular matrix formed by 1's and 0's. The columns will be indexed by the indexes of the $B$'s, and the rows by the indexes of the first partial derivatives that appear in the equations \eqref{eq.linear.system.a} and \eqref{eq.linear.system.b}. As  $B^{Ii}_{\ j}$ has three indexes, the columns of the matrix of coefficients will organized in a superior level by the index $i$, in a middle level by the index $j$ and in an inferior level by the multi-index $I$. The rows will be organized at the top by the index $J$ for the first equation, \eqref{eq.linear.system.a}, and at the bottom by the index $j$ and then by the multi-index $K$ for the second equation, \eqref{eq.linear.system.b}.

As the matrix of coefficients has more columns than rows, we shall build a second matrix that has as many columns and rows as the matrix of coefficients has rows. To do that, we select a column of the matrix of coefficients for each row index using the following \emph{algorithm} (for the sake of simplicity):
\begin{verbatim}
01   ForEach (j,K)
02      Define G={(I,i):I+1_i=K}
03      If Cardinal(G)=1
04         Select the column (i,j,I)
05      ElseIf
06         Select a column (i,j,I) such that (I,i) is in G and i\neq j
07      EndIf
08   EndFor
09   ForEach J
10      If J(1)=k-1
11         Select the column (m,m,J)
12      ElseIf
13         Select the column (1,1,J)
14      EndIf
15   EndFor
\end{verbatim}

Now, this matrix being defined and since it is full of 0's and has only few 1's, we are going to develop its determinant by rows and columns. Notice that the columns selected at line 6 have only one 1 each, thus we can cross out the rows an columns related to these 1's. Now the rows at the bottom part of the remaining matrix (related to the second equations) have only one 1 each, thus we can also cross out the rows an columns related to these 1's. Now, the remaining matrix has the property of having only one 1 per column and row (there must be at least one 1 per row and column, and no two 1's may be at the same row or column), thus its determinant is not zero and the matrix of coefficients has maximal rank.
\end{proof}

Another way to interpret the tangency condition \eqref{eq.horizontal.projector.ak} is the following one: Let us suppose we are dealing with a first order Lagrangian (example \ref{sec.1st.order.lagrangian}, equation \eqref{eq.1st.order.tangency.bs}). One could apply the theory of connections to the Lagrangian setting and the Hamiltonian one as separate frameworks. We know that they must be related by means of the Legendre transform and so are the horizontal projectors induced by these connections. Thus, equation \eqref{eq.1st.order.tangency.bs} is nothing else than the relation between the coefficients of these horizontal projectors.

% THE REDUCED MIXED SPACE W2 -------------------------------------
\subsection{The reduced mixed space $W_2$} \label{sec.srf.reduced.space}
In section \secref{sec.srf.description} we reduced the space $W_1$ to $W_2$ by considering the constraint $H_0=0$, which is a way of interpreting the coordinate $p$ as the Hamiltonian function. But $W_2$ is not a mere instrument to get rid off the coordinate $p$ or the coefficients $C_j$. As the premultisymplectic form $\Omega_{H_0}$, it encodes the dynamics of the system and, when $L$ is regular, it is a multisymplectic space. Indeed, when $k=1$, $W_2$ is diffeomorphic to $J^1\pi$ (\cf\ de Le\'on \etal\ \cite{LeonMrrMrt03}), which is not true for higher order cases.

\begin{proposition} \label{th.multisymplectic.iff.regular}
Let $W_2=\set{w\in W_1\ :\ H_0(w)=0}$ and define the $(m+1)$-form $\Omega_2$ as the pullback of $\Omega_{H_0}$ to $W_2$ by the natural inclusion $i:W_2\hookrightarrow W_0$, that is $\Omega_2=i^*(\Omega_{H_0})$. Suppose that $\dim M>1$, then, the $(m+1)$-form $\Omega_2$ is multisymplectic if and only if $L$ is regular.
\end{proposition}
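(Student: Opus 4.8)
Since $\Omega_2=i^{*}\Omega_{H_0}$ and the pullback of a closed form is closed, the only point to check for closedness is that $\Omega_{H_0}$ is closed on $W_0$, and this needs no hypothesis on $L$: from \eqref{eq.canonical.form} the canonical form $\Omega$ is a sum of wedges of exact forms, so $\dd\Omega=0$, and since $\eta$ is a volume form on the $m$-manifold $M$ we have $\dd\eta=0$, whence $\dd(\dd H_0\wedge\eta)=-\dd H_0\wedge\dd\eta=0$ and $\dd\Omega_{H_0}=0$. Thus $\Omega_2$ is automatically closed, and the whole content of the proposition is the claim that, when $\dim M>1$, the map $v\mapsto i_v\Omega_2$ on $TW_2$ has trivial kernel if and only if the highest-order Hessian $\prth{\partial^2L/\partial u^\beta_J\partial u^\alpha_K}$ of \eqref{eq.regular.lagrangian} is everywhere nonsingular.

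The plan for the nondegeneracy claim is a direct computation of $\ker\Omega_2$ in adapted coordinates, taking $n=1$ as in \secref{sec.srf.description} since the fibre dimension is irrelevant. Recall that $W_2\subset W_0$ is cut out by the $W_1$-constraints $\phi^K:=\sum_{I+1_i=K}p^{I,i}-\partial L/\partial u_K=0$ $(|K|=k)$ together with $H_0=0$, and that the coordinate expression \eqref{eq.hamiltonian.form.coordinates} of $\Omega_{H_0}$ contains no $\dd p$. A vector $v\in T_wW_2$ lies in $\ker\Omega_2$ exactly when $i_v\Omega_{H_0}$ pulls back to zero on $W_2$, \ie\ when it lies in the ideal generated by the defining functions of $W_2$ and their differentials. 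Writing $v=a^i\,\partial/\partial x^i+b_J\,\partial/\partial u_J+c^{I,i}\,\partial/\partial p^{I,i}+e\,\partial/\partial p$ (with $|I|\le k-1$, $|J|\le k$; here $e$ and some relations among the $c^{I,i}$ with $|I|=k-1$ are fixed by the tangency conditions $i_v\,\dd H_0=0$ and $i_v\,\dd\phi^K=0$, the latter of exactly the shape \eqref{eq.horizontal.projector.ak}, while the $\partial/\partial p$-component drops out of $i_v\Omega_{H_0}$ anyway), I would contract $v$ into \eqref{eq.hamiltonian.form.coordinates} and sort the result by its degree in the pointwise-independent horizontal forms $\dmx$, $\dmxi$ and $\dd^{m-2}x_{ij}$. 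This stratification is the only place where $\dim M>1$ is used: $\dd^{m-2}x_{ij}$ does not exist when $m=1$, and the independence of the three families fails there.

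Setting $i_v\Omega_2=0$ I expect the conditions to untangle from the top multi-index downwards. The $\dd^{m-2}x_{ij}$-part of $i_v\Omega_{H_0}$ involves only the $a^i$ paired with the $2$-forms $\dd p^{I,i}\wedge\du_I$; using a multi-index with $|I|<k-1$ (where these momenta and velocities are genuinely unconstrained on $W_2$), antisymmetry in $i,j$ and the independence of the $\dd p^{I,i}$ force $a^i=0$; the excluded case $k=1$ is the classical first-order one, where $W_2\cong J^1\pi$ and $\Omega_2$ is the Poincaré–Cartan $(m+1)$-form. With $a^i=0$ the $\dmxi$-part collapses to $\sum_{|I|\le k-1}\prth{b_I\,\dd p^{I,i}-c^{I,i}\,\du_I}\wedge\dmxi$, and since on $W_2$ the $u_I$ with $|I|\le k-1$, and — after the column selection of Proposition \ref{th.maximal.rank} — a maximal independent set of the momenta $p^{I,i}$ with $|I|\le k-1$, are honest coordinates, this forces $b_I=0$ and $c^{I,i}=0$ for all $|I|\le k-1$. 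Substituting back, the surviving conditions — from the $\dmx$-part and from the tangency relations $i_v\,\dd H_0=0$, $i_v\,\dd\phi^K=0$ — couple only the top velocities $b_K$ $(|K|=k)$ and the top momenta $c^{I,i}$ $(|I|=k-1)$, and, after reorganizing the sums over $I+1_i=K$ by Lemma \ref{th.multi-index.lower.sum} and the identity \eqref{eq.multi-index.identity}, reduce to $\sum_{|K|=k}\prth{\partial^2L/\partial u_J\partial u_K}\,b_K=0$; the relations $i_v\,\dd\phi^K=0$ then pin the remaining $c^{I,i}$ to zero once the $b_K$ are. Hence $\ker\Omega_2\neq\{0\}$ exactly when the Hessian \eqref{eq.regular.lagrangian} is singular, \ie\ exactly when $L$ fails to be regular.

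The main obstacle is this last step, where restriction to $W_2$ has to be handled carefully: the kernel of a pre-multisymplectic form may \emph{grow} under pullback to a submanifold, so one cannot merely intersect $\ker\Omega_{H_0}$ with $TW_2$ but must genuinely test $i_v\Omega_{H_0}$ against the full ideal of $W_2$, tracking how the differentials $\dd p^{I,i}$ with $|I|=k-1$ get eliminated through $\dd\phi^K=0$ and checking that no accidental cancellation enlarges the kernel. This is precisely where the maximal-rank statement of Proposition \ref{th.maximal.rank} is indispensable, since it is what lets the top momenta be split cleanly into free coordinates and constrained ones; everything else is the routine multi-index bookkeeping already exercised in \secref{sec.srf.dynamical.equation}.
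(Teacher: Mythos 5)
Your argument is correct in outline but takes a genuinely different route from the paper's. You run a single, uniform computation of $\ker\Omega_2$ on $W_2$ in the ambient coordinates of $W_0$, stratifying $i_v\Omega_{H_0}$ by the horizontal factors $\dd^{m-2}x_{ij}$, $\dmxi$, $\dmx$ and extracting the Hessian condition at the end. The paper instead splits the equivalence: for ``multisymplectic $\Rightarrow$ regular'' it argues by contradiction, exhibiting one explicit kernel vector --- all components zero except the $\partial/\partial u^\alpha_K$-components, chosen in the kernel of \eqref{eq.regular.lagrangian}; such a $v$ satisfies the tangency relations \eqref{eq.dw1v}--\eqref{eq.dh0v}, and since the highest-order $A$-terms in $i_v\Omega_{H_0}$ cancel on $W_1$ by Lemma \ref{th.multi-index.fubini} and \eqref{eq.w1}, it annihilates $\Omega_{H_0}$ outright. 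For ``regular $\Rightarrow$ multisymplectic'' it uses regularity to solve \eqref{eq.w1} for $u^\alpha_K=f^\alpha_K$, so that $(x^i,u^\alpha_I,p^{I,i}_\alpha)$ with \emph{all} momenta free are coordinates on $W_2$, and then reads off triviality of the kernel from the contractions $i_{\partial/\partial x^j}\Omega_2$, $i_{\partial/\partial u^\alpha_I}\Omega_2$, $i_{\partial/\partial p^{I,i}_\alpha}\Omega_2$. The paper's organization buys exactly the avoidance of what you call the main obstacle: in the regular-case coordinates no momentum differential has to be eliminated through $\dd\phi^K$, so no ideal-tracking is needed, and the singular direction requires only one witness vector rather than a full kernel description. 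Your route buys a description of $\ker\Omega_2$ valid without assuming regularity, which is more informative but forces the delicate bookkeeping with the dependent $\dd p^{I,i}_\alpha$, $\abs{I}=k-1$.

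Two points of care if you carry this out. First, your derivation of $a^i=0$ from the $\dd^{m-2}x_{ij}$-stratum uses a multi-index with $\abs{I}<k-1$ and hence needs $k\geq 2$; the proposition does not exclude $k=1$, and the paper's proof covers that case uniformly, so the appeal to the classical first-order result should be made explicit or replaced. Second, once $a^i=b_I=c^{I,i}=0$ for $\abs{I}\leq k-1$, the $\dmx$-component of $i_v\Omega_{H_0}$ reduces to $\sum_K\prth{\sum_{I+1_i=K}p^{I,i}_\alpha-\partial L/\partial u^\alpha_K}b^\alpha_K$, which vanishes identically on $W_1$ by \eqref{eq.w1} and so imposes nothing; the Hessian condition on the $b_K$ comes solely from the tangency relation $i_v\,\dd\phi^K=0$, which with the $c$'s already zero reads $\sum_{K'}b^\beta_{K'}\,\partial^2L/\partial u^\beta_{K'}\partial u^\alpha_K=0$. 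You cite both sources, but the final write-up should make clear that tangency, not the $\dmx$-stratum, is the operative one.
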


\begin{proof}
First of all, let us make some considerations. By definition, $\Omega_2$ is multisymplectic whenever $\Omega_2$ has trivial kernel, that is,
\[\hbox{\rm if }  v\in TW_2, i_v\Omega_2=0\ \Longleftrightarrow\ v=0\ . \]
This is equivalent to say that
\[ \hbox{\rm if } v\in i_*(TW_2), i_v\Omega_{H_0}|_{i_*(TW_2)}=0\ \Longleftrightarrow\ v=0\ . \]
Let $v\in TW_0$ be a tangent vector whose coefficients in an adapted basis are given by
\[ v = \lambda^i\frac{\partial}{\partial x^i} + A^\alpha_J\frac{\partial}{\partial u^\alpha_J} + B^{Ii}_\alpha\frac{\partial}{\partial p^{Ii}_\alpha} + C\frac{\partial}{\partial p}. \]
Using the expression \eqref{eq.hamiltonian.form.coordinates}, we may compute the contraction of $\Omega_{H_0}$ by $v$,
\begin{equation} \label{eq.ivomega0}
\begin{array}{rcl}
i_v\Omega_{H_0} &=& - B^{Ii}_\alpha\du^\alpha_I\wedge\dmxi + A^\alpha_I\dd p^{Ii}_\alpha\wedge\dmxi- \lambda^j\dd p^{Ii}_\alpha\wedge\du^\alpha_I\wedge\dd^{m-2}x_{ij}\\
            & & + \prth{A^\alpha_{I+1_i}p^{Ii}_\alpha + B^{Ii}_\alpha u^\alpha_{I+1_i} - A^\alpha_J\frac{\partial L}{\partial u^\alpha_J}}\dmx\\
            & & - \lambda^j\prth{p^{Ii}_\alpha\du^\alpha_{I+1_i} + u^\alpha_{I+1_i}\dd p^{Ii}_\alpha - \frac{\partial L}{\partial u^\alpha_J}\du^\alpha_J}\wedge\dd^{m-1}x_j.
\end{array}
\end{equation}
On the other hand, if we now suppose that $v$ is tangent to $W_2$ in $W_0$, that is $v\in i_*(TW_2)$, we then have that
\begin{equation}\label{tangency}
 \dd\prth{\sum_{I+1_i=K}p^{Ii}_\alpha-\frac{\partial L}{\partial u^\alpha_K}}(v)=0 \quad\textrm{and}\quad \dd H_0(v)=0, \end{equation}
which leads us to the following relations for the coefficients of $v$,
\begin{eqnarray}
&\displaystyle{ \sum_{I+1_i=K}B^{Ii}_\alpha = \lambda^i\frac{\partial^2L}{\partial x^i\partial u^\alpha_K} + A^\beta_J\frac{\partial^2L}{\partial u^\beta_J\partial u^\alpha_K}} \textrm{ and} &\label{eq.dw1v}\\
& A^\alpha_{I+1_i}p^{Ii}_\alpha + B^{Ii}_\alpha u^\alpha_{I+1_i} + C - \lambda^i\frac{\partial L}{\partial x^i} - A^\alpha_J\frac{\partial L}{\partial u^\alpha_J} = 0. &\label{eq.dh0v}
\end{eqnarray}
It is important to note that thanks to Lemma \ref{th.multi-index.fubini} and the equation \eqref{eq.w1} which defines $W_1$ (and hence $W_2$), the terms in \eqref{eq.ivomega0} and \eqref{eq.dh0v} involving $A$'s with multi-index of length $k$ cancel each other out.

These considerations being made, suppose that $\Omega_2$ is multisymplectic and, by \emph{reductio ad absurdum}, suppose in addition that $L$ is not regular, which means that the matrix
\[ \prth{\frac{\partial^2L}{\partial u^\beta_{K'}\partial u^\alpha_K}} \]
has non-trivial kernel. Let $v\in TW_0$ be a tangent vector such that all its coefficients are null except the $A$'s of highest order which are in such a way they are mapped to zero by the ``hessian'' of $L$. Such a vector $v$ fulfills the restrictions \eqref{eq.dw1v} and \eqref{eq.dh0v}, thus it must be tangent to $W_2$ in $W_0$, $v\in i_*(TW_2)$. But, as $i_v\Omega_{H_0}$ has no $A$'s of highest order, it must be zero, $i_v\Omega_{H_0}=0$, which is a contradiction.

Conversely, let us suppose that $L$ is regular, then equation \eqref{eq.w1} defines implicitly the coordinates $u^\alpha_K$ as functions of the other coordinates. That is, locally there exist functions $f^\alpha_K(x^i,u^\alpha_I,p^{I,i}_\alpha)$ such that $u^\alpha_K=f^\alpha_K$ on $i(W_2)$. Furthermore, given a system of adapted coordinates $(x^i,u^\alpha_I,u^\alpha_K,p^{I,i}_\alpha,p)$ on $W_0$, $(x^i,u^\alpha_I,p^{I,i}_\alpha)$ defines a coordinate system on $W_0$ and the inclusion is given by:
\[ (x^i,u^\alpha_I,p^{I,i}_\alpha)\in W_2 \hookrightarrow (x^i,u^\alpha_I,f^\alpha_K,p^{I,i}_\alpha,L-\sum_{|I|=0}^{k-2}p^{I,i}_\alpha u^\alpha_{I+1_i}-\sum_{|I|=k-1}p^{I,i}_\alpha f^\alpha_{I+1_i})\in W_0. \]
From equation \eqref{eq.hamiltonian.form.coordinates}, we can compute an explicit expression of the $(m+1)$-form $\Omega_2$ in this coordinate system,
\begin{eqnarray*}
\Omega_2 &=& -\sum_{|I|=0}^{k-1}\dd p^{I,i}_\alpha\wedge\du^\alpha_I\wedge\dmxi\\
         & & + \lie{\sum_{|I|=0}^{k-2}\prth{p^{I,i}_\alpha\du^\alpha_{I+1_i}+u^\alpha_{I+1_i}\dd p^{I,i}_\alpha} - \sum_{|I|=0}^{k-1}\frac{\partial L}{\partial u^\alpha_I}\du^\alpha_I}\wedge\dmx\\
         & & + \lie{\sum_{|I|=k-1}\prth{p^{I,i}_\alpha\dd f^\alpha_{I+1_i}+f^\alpha_{I+1_i}\dd p^{I,i}_\alpha} - \sum_{|K|=k}\sum_{I+1_i=K}p^{I,i}_\alpha\dd f^\alpha_K}\wedge\dmx,
\end{eqnarray*}
where we have used equation \eqref{eq.w1} in the last term. Note that, by Lemma \ref{th.multi-index.fubini}, the first and last terms of the last bracket cancel each other out. Now,
\begin{eqnarray*}
i_{\partial/\partial x^j}\Omega_2 &=& \dd p^{I,i}_\alpha\wedge\du^\alpha_I\wedge\dd^{m-2}x_{ij} - [\dots]\wedge\dd^{m-1}x_j\\
i_{\partial/\partial u^\alpha_I}\Omega_2 &=& \dd p^{I,i}_\alpha\wedge\dmxi + \prth{\sum_{J+1_j=I}p^{Jj}_\alpha-\frac{\partial L}{\partial u^\alpha_I}}\dmx\\
i_{\partial/\partial p^{I,i}_\alpha}\Omega_2 &=& \du^\alpha_I\wedge\dmxi + u^\alpha_{I+1_i}\dmx,
\end{eqnarray*}
where $0\leq|I|\leq k-1$. We deduce from here that the kernel of $\Omega_2$ is trivial, $\ker\Omega_2=\set{0}$, and $\Omega_2$ is multisymplectic.
\end{proof}

\begin{note}
In the particular case  when $\dim M=1$, the Lagrangian function $L: J^k\pi\longrightarrow \R$ is regular if and only if the pair $(\Omega_{2}, \tau_{W_2,M}^*dt)$ is a \emph{cosymplectic structure} on $W_2$. We recall that a cosymplectic structure on a manifold $N$ of odd dimension $2\bar{n}+1$ is a pair which consists of a closed $2$-form $\Omega$ and a closed $1$-form $\eta$ such that $\eta \wedge \Omega^{\bar{n}}$ is a volume form.
\end{note}

We remark that, if the Lagrangian $L$ is regular or (from Proposition \ref{th.maximal.rank}) if $k,m>1$, then there locally exist solutions $\hh$ of the dynamical equations \eqref{eq.dynamical.equation} on $W_2$ that give rise to connections $\Gamma$ in the fibration $\pi_{W_0 M} : W_0 \longrightarrow M$ along the submanifold $W_2$ (see Appendix \ref{sec.app.connections}). In such a case, a global solution is obtained using partitions of the unity, and we obtain by restriction a connection $\bar{\Gamma}$, with horizontal projector $\bar{\hh}$, in the fibre bundle $\pi_{W_2 M} : W_2 \longrightarrow M$, which is a solution of equation \eqref{eq.dynamical.equation} when it is restricted to $W_2$ (in fact, we have a family of such solutions).

In some cases, but only when $\dim M=1$ or $k=1$, it would be necessary to consider a subset $W_3$ defined in order to satisfy the tangency conditions \eqref{eq.horizontal.projector.ak} and \eqref{eq.horizontal.projector.cs}:
\begin{eqnarray*}
W_3 &=& \{ w\in W_2\ /\ \exists\hh_w:T_wW_0\To T_wW_2\textrm{ linear such that }\hh_w^2=\hh_w,\\
    & & \ker\hh_w=(V\pi_{W_0,M})_w,\ i_{\hh_w}\Omega_{H_0}(w)=(m-1)\Omega_{H_0}(w) \}.
\end{eqnarray*}
We will assume that ${W}_3$ is a submanifold of $W_2$. If $\hh_w(T_wW_0)$ is not contained in $T_wW_3$, we go to the third step, and so on. At the end, and if the system has solutions, we will find a final constraint submanifold $W_f$, fibered over $M$ (or over some open subset of $M$) and a connection ${\Gamma}_f$ in this fibration such that ${\Gamma}_f$ is a solution of equation \eqref{eq.dynamical.equation} restricted to ${W}_f$.

In any case, one obtains the Euler-Lagrange equations. In the following result, $W_f$ denotes the final constraint manifold, which is $W_2$ when $k,m>1$, and $\hh$ the horizontal projector of a connection in $\pi_{W_2,M}:W_f\To M$ along $W_f$, which is solution of the dynamical equation.

\begin{proposition} \label{th.euler-lagrange.equations}
Let $\bar{\sigma}$ be a section of $\pi_{W_f,M}:W_f\To M$ and denote $\sigma=i\circ\bar\sigma$, where $i: W_f\hookrightarrow W_0$ is the canonical inclusion. If $\bar\sigma$ is an integral section of $\hh$, then $\bar{\sigma}$ is \emph{holonomic}, in the sense that
\begin{equation} \label{eq.holonomic.section}
\pr_1\circ\sigma = j^k(\pi_{W_f,E}\circ\bar{\sigma}),
\end{equation}
and satisfies the higher-order Euler-Lagrange equations:
\begin{equation} \label{eq.euler-lagrange.equations}
j^{2k}(\pi_{W_f,E}\circ\bar\sigma)^*\prth{\sum_{|J|=0}^k(-1)^{|J|}\frac{\dd^{|J|}}{\dx^J}\frac{\partial L}{\partial u^\alpha_J}} = 0.
\end{equation}
\end{proposition}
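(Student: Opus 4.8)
The plan is to pass to adapted coordinates $(x^i,u^\alpha_J,p^{I,i}_\alpha,p)$ on $W_0$ and translate the hypothesis ``$\bar\sigma$ is an integral section of $\hh$'' into a first order PDE system for the components of $\sigma=i\circ\bar\sigma$. Writing $\phi:=\pi_{W_f,E}\circ\bar\sigma$ for the induced local section of $\pi:E\To M$ (so that $\phi^\alpha=u^\alpha\circ\sigma$ for the zero multi-index), the expression \eqref{eq.horizontal.projector} of the horizontal projector shows that being an integral section amounts to
\[ \frac{\partial(u^\alpha_J\circ\sigma)}{\partial x^j}=A^\alpha_{Jj}\circ\sigma,\qquad \frac{\partial(p^{I,i}_\alpha\circ\sigma)}{\partial x^j}=B^{Ii}_{\alpha j}\circ\sigma,\qquad \frac{\partial(p\circ\sigma)}{\partial x^j}=C_j\circ\sigma; \]
and, since $\hh$ solves the dynamical equation \eqref{eq.dynamical.equation} along $W_f$, I may use that its coefficients satisfy \eqref{eq.horizontal.projector.as}--\eqref{eq.w1}, \eqref{eq.horizontal.projector.ak} and \eqref{eq.horizontal.projector.cs}.

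First I would establish the holonomy \eqref{eq.holonomic.section}. By \eqref{eq.horizontal.projector.as}, for $|I|\le k-1$ the first group of equations reads $\partial(u^\alpha_I\circ\sigma)/\partial x^j=u^\alpha_{I+1_j}\circ\sigma$; starting from $u^\alpha\circ\sigma=\phi^\alpha$ and inducting on $|I|$ gives $u^\alpha_I\circ\sigma=\partial^{|I|}\phi^\alpha/\partial x^I$ for all $|I|\le k-1$, and then, for $|K|=k$, choosing any decomposition $K=I+1_j$ with $|I|=k-1$, that $u^\alpha_K\circ\sigma=\partial^k\phi^\alpha/\partial x^K$, the right-hand side being independent of the decomposition by equality of mixed partials. (This step uses \eqref{eq.horizontal.projector.as} only, not the tangency conditions.) This is exactly $\pr_1\circ\sigma=j^k\phi$; a consequence is that, for any $f\in C^\infty(J^k\pi)$, $\partial(f\circ\sigma)/\partial x^j=(D_jf)\circ j^{k+1}\phi$, where $D_j$ denotes the total derivative (equivalently, $\partial/\partial x^j$ along the holonomic section $\sigma$), and likewise for iterated derivatives.

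Next I would derive the Euler--Lagrange equations by a telescoping argument in the momenta. Restricting everything to $\sigma$ (so all quantities become functions on $M$), writing $D_K:=D_1^{K(1)}\cdots D_m^{K(m)}$, and abbreviating $\Pi^J_\alpha:=\sum_{I+1_i=J}p^{I,i}_\alpha$ for $|J|\ge1$, the equations \eqref{eq.horizontal.projector.b1}, \eqref{eq.horizontal.projector.bs} and \eqref{eq.w1}, combined with $\partial(p^{I,i}_\alpha\circ\sigma)/\partial x^j=B^{Ii}_{\alpha j}\circ\sigma$, become $\partial L/\partial u^\alpha=\sum_{|J|=1}D_J\Pi^J_\alpha$ (using $\Pi^{1_j}_\alpha=p^{0,j}_\alpha$), $\Pi^J_\alpha=\partial L/\partial u^\alpha_J-\sum_j D_j p^{J,j}_\alpha$ for $1\le|J|\le k-1$, and $\Pi^K_\alpha=\partial L/\partial u^\alpha_K$ for $|K|=k$. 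Substituting the middle family into the first identity one length at a time, and using at each step the reindexing $\sum_{|J|=l}\sum_j D_{J+1_j}p^{J,j}_\alpha=\sum_{|K|=l+1}D_K\Pi^K_\alpha$ (a multi-index Fubini, \cf\ Lemma \ref{th.multi-index.fubini}), each derivative of $\partial L/\partial u^\alpha_J$ of length $l$ emerges exactly once with sign $(-1)^{l-1}$, the recursion terminating at length $k$ through \eqref{eq.w1}. (When $k=1$ the middle family is empty and one lands directly on $\partial L/\partial u^\alpha-\sum_j D_j\,\partial L/\partial u^\alpha_{1_j}=0$.) Collecting the terms yields $\sum_{|J|=0}^k(-1)^{|J|}D_J(\partial L/\partial u^\alpha_J)=0$ along $\sigma$; since $\partial L/\partial u^\alpha_J\in C^\infty(J^k\pi)$ and $|J|\le k$, this is precisely the pullback by $j^{2k}\phi$ of the higher order Euler--Lagrange operator, which is \eqref{eq.euler-lagrange.equations}.

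The main obstacle will be the combinatorial bookkeeping of the multi-index sums in the telescoping: one must check that the partial momenta $p^{I,i}_\alpha$, once grouped as $\Pi^J_\alpha$ and hit with iterated total derivatives, recombine so that every Euler--Lagrange term $D_J(\partial L/\partial u^\alpha_J)$ appears exactly once, with the right sign, and with no spurious multinomial factor --- the factors $\tfrac{I(i)+1}{|I|+1}$ of \eqref{eq.horizontal.projector.bs.with.q}--\eqref{eq.w1.with.q} having already been absorbed, via \eqref{eq.qs} and \eqref{eq.multi-index.identity}, into the clean identities \eqref{eq.horizontal.projector.bs} and \eqref{eq.w1}. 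This is where the multi-index machinery of Appendix \ref{sec.app.multi-index.properties} (Lemmas \ref{th.multi-index.lower.sum} and \ref{th.multi-index.fubini}, and the identity \eqref{eq.multi-index.identity}) is needed. One should finally record that equation \eqref{eq.horizontal.projector.cs} for the $C_j$'s imposes no extra restriction on $\phi$: restricted to $\sigma$ it is just the $x$-derivative of the constraint $p=L-p^{I,i}_\alpha u^\alpha_{I+1_i}$ cutting out $W_2$, hence automatically satisfied.
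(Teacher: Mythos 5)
Your proposal is correct and follows essentially the same route as the paper: holonomy is extracted from \eqref{eq.horizontal.projector.as} by induction on the multi-index length, and the Euler--Lagrange equations are obtained by telescoping the momentum relations \eqref{eq.horizontal.projector.b1}, \eqref{eq.horizontal.projector.bs} and \eqref{eq.w1} through Lemma \ref{th.multi-index.fubini}. Your grouping of the partial momenta into $\Pi^J_\alpha=\sum_{I+1_i=J}p^{I,i}_\alpha$ and the use of total derivatives $D_J$ merely streamline the bookkeeping that the paper carries out explicitly with pullbacks along $j^{l}\phi$.
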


\begin{proof}
If $\sigma=(x^i,\sigma^\alpha_J,\sigma^{I,i}_\alpha,\tilde\sigma)$ is an integral section of $\hh$, then
\[ \frac{\partial\sigma^\alpha_J}{\partial x^j}=A^\alpha_{Jj},\ \frac{\partial\sigma^{Ii}_\beta}{\partial x^j}=B^{Ii}_{\beta j} \textrm{ and } \frac{\partial\tilde\sigma}{\partial x^j}=C_j, \]
where the $A$'s, $B$'s and $C$'s are the coefficients given in \eqref{eq.horizontal.projector}. From equation \eqref{eq.horizontal.projector.as}, we have that $\sigma$ is holonomic, in the sense that $\sigma^\alpha_{I+1_i}=\partial\sigma^\alpha_I/\partial x^i$. On the other hand, using the equations \eqref{eq.horizontal.projector.b1}, \eqref{eq.horizontal.projector.bs} and \eqref{eq.w1}, we obtain the relations (where $\phi=pr_1\circ\sigma$):
\begin{eqnarray}
                                0 &=& \frac{\partial L}{\partial u^\alpha}\circ\phi - \frac{\partial\sigma^{\ j}_\alpha}{\partial x^j}; \label{eq.integral.sections.0}\\
\sum_{I+1_i=J}\sigma^{I,i}_\alpha &=& \frac{\partial L}{\partial u^\alpha_J}\circ\phi - \frac{\partial\sigma^{Jj}_\alpha}{\partial x^j}, \textrm{ with } |J|=1,\dots,k-1; \label{eq.integral.sections.i}\\
\sum_{I+1_i=K}\sigma^{I,i}_\alpha &=& \frac{\partial L}{\partial u^\alpha_K}\circ\phi, \textrm{ with } |K|=k. \label{eq.integral.sections.k}
\end{eqnarray}
From the equations \eqref{eq.integral.sections.0} and \eqref{eq.integral.sections.i} for $|J|=1$ we get
\begin{eqnarray*}
0 &=& \frac{\partial L}{\partial u^\alpha}\circ\phi - \frac{\partial\sigma^{\ j}_\alpha}{\partial x^j}\\
  &=& (j^0\phi)^*\frac{\partial L}{\partial u^\alpha} - \sum_{|I|=1}(j^1\phi)^*\prth{\frac{\dd^{|I|}}{\dx^I}\frac{\partial L}{\partial u^\alpha_I}} + \sum_{|I|=1}\sum_i\frac{\partial^{|I|}}{\partial x^I}\frac{\partial\sigma^{Ii}_\alpha}{\partial x^i}.
\end{eqnarray*}
Applying now Lemma \ref{th.multi-index.fubini} on the last term and repeating this process until $|I|=k-1$ we reach
\begin{eqnarray*}
0 &=& (j^0\phi)^*\frac{\partial L}{\partial u^\alpha} - \sum_{|I|=1}(j^1\phi)^*\prth{\frac{\dd^{|I|}}{\dx^I}\frac{\partial L}{\partial u^\alpha_I}} + \sum_{|J|=2}\frac{\partial^{|J|}}{\partial x^J}\sum_{I+1_i=J}\sigma^{Ii}_\alpha\\
  &=& (j^0\phi)^*\frac{\partial L}{\partial u^\alpha} - \sum_{|I|=1}(j^1\phi)^*\prth{\frac{\dd^{|I|}}{\dx^I}\frac{\partial L}{\partial u^\alpha_I}} + \sum_{|I|=2}(j^2\phi)^*\prth{\frac{\dd^{|I|}}{\dx^I}\frac{\partial L}{\partial u^\alpha_I}} - \sum_{|I|=2}\sum_i\frac{\partial^{|I|}}{\partial x^I}\frac{\partial\sigma^{Ii}_\alpha}{\partial x^i}\\
  &=& \sum_{|I|=0}^{k-1}(-1)^{|I|}(j^{|I|}\phi)^*\prth{\frac{\dd^{|I|}}{\dx^I}\frac{\partial L}{\partial u^\alpha_I}} - (-1)^{k-1}\sum_{|I|=k-1}\sum_i\frac{\partial^{|I|}}{\partial x^I}\frac{\partial\sigma^{Ii}_\alpha}{\partial x^i}\\
  &=& \sum_{|I|=0}^{k-1}(-1)^{|I|}(j^{|I|}\phi)^*\prth{\frac{\dd^{|I|}}{\dx^I}\frac{\partial L}{\partial u^\alpha_I}} - (-1)^{k-1}\sum_{|K|=k}\frac{\partial^{|K|}}{\partial x^K}\sum_{I+1_i=K}\sigma^{Ii}_\alpha,
\end{eqnarray*}
where by abuse of notation $j^l\phi=j^{k+l}(\pi_{W_f,E}\circ\bar{\sigma})$. Finally, it only rest to use equation \eqref{eq.integral.sections.k} to prove the desired result.
\end{proof}

% EXAMPLES -------------------------------------------------------
\section{Examples}
First, we are going to study the particular cases when $k=1$ and $m=1$, which correspond to the First Order Classical Field Theory and to the Higher Order Mechanical Systems, respectively. Theoretic results for these cases are very well known \cite{BntMrt05,LeonMrrMrt03,MrnMnzRmn03,GrcPnsRmn91} and we are only going to recover these results from our general setting.

\begin{example}[First order Lagrangians ($k=1$)] \label{sec.1st.order.lagrangian}
Let us suppose that $k=1$, which corresponds to the case of first order Lagrangians. In that case the velocity-momentum space is $W_0=J^1\pi\otimes_E\Lambda^m_2E$, with adapted coordinates $(x^i,u^\alpha,u^\alpha_i,p,p^i_\alpha)$. The premultisymplectic $(m+1)$-form would be
\begin{equation}
\Omega_{H_0} = -\dd p^i_\alpha\wedge\du^\alpha\wedge\dmxi+\prth{p^i_\alpha\du^\alpha_i+u^\alpha_i\dd p^i_\alpha-\frac{\partial L}{\partial u^\alpha}\du^\alpha-\frac{\partial L}{\partial u^\alpha_i}\du^\alpha_i}\wedge\dmx,
\end{equation}
and horizontal projectors on $TW_0$ would have locally the form:
\begin{equation}
\hh = \prth{\frac{\partial}{\partial x^j}+A^\alpha_j\frac{\partial}{\partial u^\alpha}+A^\alpha_{ij}\frac{\partial}{\partial u^\alpha_i}+B^{\ i}_{\alpha j}\frac{\partial}{\partial p^i_\alpha}+C_j\frac{\partial}{\partial p}}\tensor\dd x^j.
\end{equation}
Solutions of the dynamical equation would satisfy the relations
\begin{eqnarray}
\sum_{j=1}^mB^{\ j}_{\alpha j} &=& \frac{\partial L}{\partial u^\alpha}; \label{eq.1st.order.skrs.bs} \\
                    p^i_\alpha &=& \frac{\partial L}{\partial u^\alpha_i}, \textrm{ for } i=1,\dots,m; \\
                    A^\alpha_i &=& u^\alpha_i, \textrm{ for } i=1,\dots,m;
\end{eqnarray}
from which we deduce the Euler-Lagrange equations
\begin{equation}
j^2(\pi_{W_2,M}\circ\sigma)^*\prth{\frac{\partial L}{\partial u^\alpha}-\sum_{i=1}^m\frac{\dd}{\dx^i}\frac{\partial L}{\partial u^\alpha_i}} = 0,
\end{equation}
where $W_2$ is defined by
\begin{equation}
W_2 = \set{ (x^i,u^\alpha,u^\alpha_i,p,p^i_\alpha)\in W_1\ :\ p^i_\alpha=\frac{\partial L}{\partial u^\alpha_i}, \ p=L-\sum_{i=1}^mp^iu_i }.
\end{equation}
We then obtain the tangency conditions:
\begin{eqnarray}
B^{\ i}_{\alpha j} &=& \frac{\partial^2L}{\partial x^j\partial u^\alpha_i} + u^\beta_j\frac{\partial^2L}{\partial u^\beta\partial u^\alpha_i} + \sum_{l=1}^mA^\beta_{lj}\frac{\partial^2L}{\partial u^\beta_l\partial u^\alpha_i}, \label{eq.1st.order.tangency.bs}\\
C_j &=& \frac{\partial L}{\partial x^j} + u^\alpha_j\frac{\partial L}{\partial u^\alpha} - B^{\ i}_{\alpha j}u^\alpha_i.
\end{eqnarray}
Note that \eqref{eq.1st.order.tangency.bs} is the relation that would appear between the coefficients of a Lagrangian and a Hamiltonian setting through the Legendre transform. For simplicity, suppose that $n=1$ and ignore the $\alpha$'s and $\beta$'s that appear above. Consider the linear system of equations with respect to the $B$'s formed by equations \eqref{eq.1st.order.skrs.bs} and \eqref{eq.1st.order.tangency.bs}. This system is overdetermined since it has $m^2+1$ equations and only $m^2$ variables ($B^i_j$).
\end{example}

\begin{example}[Higher order mechanical systems ($m=1$)] \label{sec.mechanical.system}
Let us suppose that $m=1$, which corresponds to the case of mechanical systems. In that case the velocity-momentum space is $W_0=J^k\pi\times_{J^{k-1}\pi}\Lambda^m_2(J^{k-1}\pi)$. Since here a multi-index $J$ is of the form $(l)$ with $1\leq l\leq k$, we change the usual notation for coordinates to
\[ u^\alpha_J\longrightarrow u^\alpha_{|J|} \quad\textrm{and}\quad p^{I,1}_\alpha\longrightarrow p^{|I|+1}_\alpha, \]
and we adapt the remaining objects to this notation. So adapted coordinates on $W_0$ are of the form $(x,u^\alpha, u^\alpha_l,p,p^l_\alpha)$, where $l=1,\dots,k$. The premultisymplectic $(m+1)$-form would be
\begin{equation}
\Omega_{H_0} = -\sum_{l=0}^{k-1}\dd p^{l+1}_\alpha\wedge\du^\alpha_l+\sum_{l=1}^k\prth{p^l_\alpha\du^\alpha_l+u^\alpha_l\dd p^l_\alpha}\wedge\dx-\sum_{l=0}^k\frac{\partial L}{\partial u^\alpha_l}\du^\alpha_l\wedge\dx,
\end{equation}
and horizontal projectors on $TW_0$ would have locally the form:
\begin{equation}
\hh = \prth{\frac{\partial}{\partial x}+\sum_{l=0}^kA^\alpha_l\frac{\partial}{\partial u^\alpha_l}+\sum_{l=1}^kB^l_\alpha\frac{\partial}{\partial p^l_\alpha}+C\frac{\partial}{\partial p}}\tensor\dx.
\end{equation}
Solutions of the dynamical equation would satisfy the relations
\begin{eqnarray}
B^1_\alpha &=& \frac{\partial L}{\partial u^\alpha}; \\
p^l_\alpha &=& \frac{\partial L}{\partial u^\alpha_l} - B^{l+1}_\alpha, \textrm{ for } l=1,\dots,k-1;\label{eq.mechanical.systems.skrs.bs} \\
p^k_\alpha &=& \frac{\partial L}{\partial u^\alpha_k};\label{eq.mechanical.systems.skrs.w1} \\
A^\alpha_l &=& u^\alpha_{l+1}, \textrm{ for } l=0,\dots,k-1.
\end{eqnarray}
which we use to get the Euler-Lagrange equations
\begin{equation}
j^{2k}(\pi_{W_2,M}\circ\sigma)^*\prth{\sum_{l=0}^k(-1)^l\frac{\dd^l}{\dx^l}\frac{\partial L}{\partial u^\alpha_l}} = 0,
\end{equation}
where $W_2$ is defined by
\begin{equation}
W_2 = \set{ (x^i,u^\alpha,u^\alpha_l,p,p^l_\alpha)\in W_1\ :\ p^k_\alpha=\frac{\partial L}{\partial u^\alpha_k}, \ p=L-\sum_{l=1}^kp^l_\alpha u^\alpha_l }.
\end{equation}
We then obtain the tangency conditions:
\begin{eqnarray}
B^k_\alpha &=& \frac{\partial^2L}{\partial x\partial u^\alpha_k} + \sum_{l=0}^{k-1}u^\beta_{l+1}\frac{\partial^2L}{\partial u^\beta_l\partial u^\alpha_k} + A^\beta_{k'}\frac{\partial^2L}{\partial u^\beta_{k'}\partial u^\alpha_k} = 0; \label{eq.mechanical.systems.tangencyty.bs}\\
C &=& \frac{\partial L}{\partial x} + \sum_{l=0}^{k-1}u^\alpha_{l+1}\frac{\partial L}{\partial u^\alpha_l} + A^\alpha_k\frac{\partial L}{\partial u^\alpha_k} - \sum_{l=1}^k\prth{A^\alpha_lp^l_\alpha + B^l_{\alpha j}u^\alpha_l}. \label{eq.mechanical.systems.tangencyty.cs}
\end{eqnarray}
Note that, thanks to equation \eqref{eq.mechanical.systems.skrs.w1}, the terms in \eqref{eq.mechanical.systems.tangencyty.cs} with coefficient $A_k$ cancel out. Now, for simplicity, suppose that $n=1$ and ignore the $\alpha$'s and $\beta$'s that appear above. Consider the linear system of equations with respect to the $B$'s formed by equations \eqref{eq.mechanical.systems.skrs.bs} (with $l=k-1$) and \eqref{eq.mechanical.systems.tangencyty.bs}. This system is overdetermined since it has 2 equations and only one variable ($B^k$).
\end{example}

\begin{example}[The loaded and clamped plate]
Let us set $M=\RR^2$ and $E=\RR^2\times\RR=\RR^3$, and consider the Lagrangian
\[ L(x,y,u,u_x,u_y,u_{xx},u_{xy},u_{yy}) = \frac12(u_{xx}^2+2u_{xy}^2+u_{yy}^2-2qu), \]
where $q=q(x,y)$ is the normal load on the plate. Given a regular region $R$ of the plane, we look for the extremizers of the functional $I(u)=\int_RL$ such that $u=\partial u/\partial n=0$ on the border $\partial R$, where $n$ is the normal exterior vector. The Euler-Lagange equation associated to the problem is
\begin{equation}
u_{xxxx}+2u_{xxyy}+u_{yyyy} = q.
\end{equation}
Written in the multi-index notation, the Lagrangian has the form
\[ L(j^2\phi) = \frac12(u_{(2,0)}^2+2u_{(1,1)}^2+u_{(0,2)}^2-2qu) \]
and the Euler-Lagrange equation reads
\[u_{(4,0)}+2u_{(2,2)}+u_{(0,4)} = q. \]
The velocity-momentum space is $W_0=J^2\pi\times_{J^1\pi}\Lambda^2_2(J^1\pi)$, with adapted coordinates $(x,y,u_x,$ $u_y,u_{xx},u_{xy},u_{yy},p,p^x,p^y,p^{yy},p^{xy},p^{yx},p^{yy})$. It is straightforward to write down the premultisymplectic 3-form and a general horizontal projector on $TW_0$, so we are not going to do it here. Even so, the coefficients of solutions of the dynamical equation would satisfy the relations
\begin{equation} \label{eq.plate.horizontal.projector}
B^{\ ,x}_x+B^{\ ,y}_y=-2q \qquad
\begin{array}{rcl}
-p^x &=& B^{x,x}_x + B^{x,y}_y\\
-p^y &=& B^{y,x}_x + B^{y,y}_y
\end{array}
\qquad
\begin{array}{rcl}
       p^{xx} &=& u_{xx}\\
p^{xy}+p^{yx} &=& 2u_{xy}\\
       p^{yy} &=& u_{yy}
\end{array}
\end{equation}
where the latter ones are the equations that define $W_1$. The tangency condition on $W_1$ gives us the relations
\begin{equation} \label{eq.plate.tangency}
\begin{array}{rcl}
          B^{x,x}_x &=&  A_{xx,x}\\
B^{x,y}_x+B^{y,x}_x &=& 2A_{xy,x}\\
          B^{y,y}_x &=&  A_{yy,x}
\end{array}
\qquad
\begin{array}{rcl}
          B^{x,x}_y &=&  A_{xx,y}\\
B^{x,y}_y+B^{y,x}_y &=& 2A_{xy,y}\\
          B^{y,y}_y &=&  A_{yy,y}
\end{array}
\end{equation}
from where we can see that the Lagrangian is ``regular'', since
\begin{equation}
\prth{\frac{\partial^2L}{\partial u_K\partial u_{K'}}}_{|K|=|K'|=2} = \prth{\begin{array}{ccc}1&0&0\\0&2&0\\0&0&1\end{array}}.
\end{equation}
Finally, we remark that the middle equations of \eqref{eq.plate.horizontal.projector} and \eqref{eq.plate.tangency} form a $8\times8$ linear system of equations on the $B$'s, which is completely determined.
\end{example}

\begin{example}[The Camassa-Holm equation]
In 1993, Camassa and Holm introduced the following completely integrable bi-Hamiltonian equation (see \cite{CmssHlm93}):
\begin{equation} \label{eq.camassa-holm.eulerian}
v_t-v_{yyt}=-3vv_y+2v_yv_{yy}+vv_{yyy},
\end{equation}
which is used to model the breaking waves in shallow waters as the Korteweg–de Vries equation. But, as the former is of higher order, we are going to use it as example.

The CH equation \eqref{eq.camassa-holm.eulerian} is expressed in terms of the Eulerian or spatial velocity field $u(y,t)$, and it is the Euler-Poincar\'e equation of the reduced Lagrangian
\begin{equation} \label{eq.camassa-holm.lagrangian.reduced}
l(v)=\frac12\int\prth{v^2+v_y^2}\dy.
\end{equation}
To give a multisymplectic approach to the problem, as Kouranbaeva and Shkoller did (see \cite{KrbShk00}), we must express the CH equation \eqref{eq.camassa-holm.eulerian} in Lagrangian terms. Thus, we shall use the Lagrangian variable $u(x,t)$ that arises as the solution of
\begin{equation} \label{eq.lagrangian.variable}
\frac{\partial u(x,t)}{\partial t} = v(u(x,t),t).
\end{equation}
The independent variables $(x,t)$ are coordinates for the base space $M=S^1\times\RR$, and the dependent variable $u(x,t)$ is a fiber coordinate for the total space $E=S^1\times\RR\times\RR=S^1\times\RR^2$. The Lagrangian action is now written as
\begin{equation} \label{eq.camassa-holm.lagrangian.action}
L(x,t,u,u_x,u_t,u_{xx},u_{xt},u_{tt}) = \frac12(u_xu_t^2+u_x^{-1}u_{xt}^2)
\end{equation}
The coefficients of a horizontal projector which is solution of the dynamical equation must satisfy
\begin{equation} \label{eq.ch.projector}
\begin{array}{rcl}
B^{\ ,x}_x+B^{\ ,t}_t&=&0\\
                 p^x &=& 1/2(u_t^2-(u_{xt}/u_x)^2) - (B^{x,x}_x + B^{x,t}_t)\\
                 p^t &=& u_xu_t - (B^{t,x}_x + B^{t,t}_t)\\
              p^{xx} &=& 0\\
       p^{xt}+p^{tx} &=& u_{xt}/u_x\\
              p^{tt} &=& 0
\end{array}
\end{equation}
where the last three are the equations that define $W_1$. The tangency condition on $W_1$ gives us the relations
\begin{equation} \label{eq.ch.tangency}
\begin{array}{rcl}
          B^{x,x}_x &=&  0\\
B^{x,t}_x+B^{t,x}_x &=& -u_x^{-1}u_{xx}u_{xt}+A_{xt,x}u_x^{-1}\\
          B^{t,t}_x &=&  0\\
          B^{x,x}_t &=&  0\\
B^{x,t}_t+B^{t,x}_t &=& -(u_{xt}/u_x)^2+A_{xt,t}u_x^{-1}\\
          B^{t,t}_t &=&  0
\end{array}
\end{equation}
from where we can see that the Lagrangian is clearly ``singular'', since
\begin{equation}
\prth{\frac{\partial^2L}{\partial u_K\partial u_{K'}}}_{|K|=|K'|=2} = \prth{\begin{array}{ccc}0&0&0\\0&u_x^{-1}&0\\0&0&0\end{array}}
\end{equation}
Again, we may form a completely determined system of linear equations on the $B$'s with the corresponding relations of \eqref{eq.plate.horizontal.projector} and the equations \eqref{eq.ch.tangency}.
\end{example}

\begin{example}[First order Lagrangian as second order]
For the sake of simplicity, let suppose that $n=1$. Given a first order Lagrangian $L:J^1\pi\To\RR$, extend it to a second order Lagrangian, $\bar L=L\circ\pi_{2,1}$. Consider the first and second order velocity-momenta mixed spaces $W^1_0=J^1\pi\times_E\Lambda^m_2E$ and $W^2_0=J^2\pi\times_{J^2\pi}\Lambda^m_2(J^2\pi)$, with adapted coordinates $(x^i,u,u_i,p,p^i)$ and $(x^i,u,u_i,u_K,p,p^i,p^{ij})$ (with $|K|=2$), respectively. Let $\pi^{2,1}_0:W^2_0\To W^1_0$ be the natural projection (Diagram \ref{fig.1st-2nd.order}).

\begin{figure}[h]
\[\xymatrix{
  W^2_0  \ar[rr]^{\pi^{2,1}_0} \ar[drr] \ar[d]   && W^1_0  \ar[drr] \ar[d] &&  \\
  J^2\pi \ar[rr]^{\pi_{2,1}}   \ar[drr]^{\bar L} && J^1\pi \ar[rr] \ar[drr]^{\pi_1} \ar[d]^L && E \ar[d]^\pi\\
                                                 && \RR                                      && M
  }\]
\caption{The 1st and 2nd order Lagrangian settings} \label{fig.1st-2nd.order}
\end{figure}

We are going to apply the theory we have developed here to the systems given by each Lagrangian. Consider the premultisymplectic forms $\Omega_{H_0}$ and $\Omega_{\bar H_0}$, where $H_0$ and $\bar H_0$ are the corresponding dynamical functions (equations \eqref{eq.hamiltonian} and \eqref{eq.hamiltonian.form}). Let $\hh$ and $\bar\hh$ denote solutions of the respective dynamical equations on $(W^1_0,\Omega_{H_0})$ and $(W^2_0,\Omega_{\bar H_0})$. They would locally have the form
\begin{eqnarray*}
\hh &=& \prth{\frac{\partial}{\partial x^j}+A_j\frac{\partial}{\partial u}+A_{ij}\frac{\partial}{\partial u_i}+B^i_j\frac{\partial}{\partial p^i}+C_j\frac{\partial}{\partial p}}\tensor\dd x^j,\\
\bar\hh &=& \prth{\frac{\partial}{\partial x^j}+\bar A_j\frac{\partial}{\partial u}+\bar A_{ij}\frac{\partial}{\partial u_i}+\bar A_{Kj}\frac{\partial}{\partial u_K}+\bar B^i_j\frac{\partial}{\partial p^i}+\bar B^{ki}_j\frac{\partial}{\partial p^{ki}}+\bar C_j\frac{\partial}{\partial p}}\tensor\dd x^j,
\end{eqnarray*}
where $|K|=2$. We then obtain the relations
\begin{eqnarray}
B^j_j &=& \frac{\partial L}{\partial u},\label{eq.1st.order.trace}\\
  p^i &=& \frac{\partial L}{\partial u_i},\label{eq.1st.order.w11}\\
  A_i &=& u_i,
\end{eqnarray}
for $(W^1_0,\Omega_{H_0},\hh)$; and
\begin{eqnarray}
\bar B^j_j &=& \frac{\partial L}{\partial u},\\
                p^i &=& \frac{\partial L}{\partial u_i} - \bar B^{ij}_j,\\
    p^{ij}+p^{ji} &=& (1_i+1_j)!\cdot\frac{\partial \bar L}{\partial u_{1_i+1_j}} = 0,\label{eq.1st.order.w21}\\
           \bar A_i &=& u_i,\\
        \bar A_{ij} &=& u_{1_i+1_j},
\end{eqnarray}
for $(W^2_0,\Omega_{\bar H_0},\bar\hh)$. Equations \eqref{eq.1st.order.w11} and \eqref{eq.1st.order.w21}, together with $H_0=0$ and $\bar H_0=0$, define the corresponding submanifolds $W^1_2$ and $W^2_2$ of $W^1_0$ and $W^2_0$.

We notice that, even though $\bar L$ is in some sense the same Lagrangian than $L$, a solution of the dynamical equation on $W^1_0$ may be easily determined, while in $W^2_0$ the space of solutions has grown (there are more coefficients to be determined). We thus infer from here, that a solution $\bar\hh$ of the dynamical equation in $W^2_0$ must satisfy an extra condition. Since $p=L-p^iu_i+0$ in $W^2_2$, the projection $\pi^{2,1}_0$ maps $W^2_2$ to $W^1_2$. We therefore impose to a solution $\bar\hh$ of the dynamical equation along $W^2_2$ to be in addition projectable to a solution $\hh$ of the dynamical equation along $W^1_2$. In such a case, we would have that
\begin{equation}
\bar B^{ij}_j=0
\end{equation}
which implies that the following equation
\begin{equation}
p^i=\frac{\partial L}{\partial u_i}
\end{equation}
is now a restriction in $W^2_2$. So, by tangency condition, we get
\begin{equation}
\bar B^i_j = \frac{\partial^2 L}{\partial x^j\partial u_i} + u_j\frac{\partial^2 L}{\partial u\partial u_i} + u_{1_k+1_j}\frac{\partial^2 L}{\partial u_k\partial u_i} + 0 = \frac{\dd}{\dx^j}\frac{\partial L}{\partial u_i}.
\end{equation}
Combining this with equation \eqref{eq.1st.order.trace}, we finally obtain
\begin{equation}
\frac{\partial L}{\partial u} - \frac{\dd}{\dx^j}\frac{\partial L}{\partial u_j} = 0,
\end{equation}
which is the Euler-Lagrange equation.

It is worth to remark here that, at this time, the Euler-Lagrange equation has not been deduced by the process shown in the proof of Proposition \ref{th.euler-lagrange.equations}, but directly from the projectability condition, although the previous Euler-Lagrange equation may be recovered from any of the two settings.
\end{example}

% CONCLUSION -----------------------------------------------------
\section{Conclusion}
We have developed an intrinsic and global expression for the Euler-Lagrange equations for higher-order field theories. The main ingredients of this setting are the mixed space of velocities and momenta $W_0=J^k\pi\times_{J^{k-1}\pi}\Lambda^m_2(J^{k-1}\pi)$ and the premultisymplectic form
\[ \Omega_{H_0} = \Omega + \dd H_0\wedge\eta \]
defined on it, which encodes the dynamics of the system through the dynamical equation
\[ i_\hh\Omega_{H_0}=(m-1)\Omega_{H_0}. \]
We have analyzed in detail the existence of solution of this equation. Our approach  gives rise  to an unambiguous formulation of Lagrangian field theories of higher order.

In a future paper we will explore the extension of our techniques to the case of higher-order field theories with constraints, optimal control problems for partial differential equations and the implementation of numerical methods obtained directly from our approximation.

% APPENDIX -------------------------------------------------------
\appendix

\section{Connections} \label{sec.app.connections}
A connection $\Gamma$ in a fibration $\pi_{P,M}:P\To M$ is given by a horizontal distribution $\HH$ which is complementary to the vertical one $V\pi_{P,M}$, that is
\[ TP = \HH \oplus V\pi_{P,M}. \]
Associated to the connection there exists a horizontal projector $\hh:TP\To\HH$ defined in the obvious manner. If $(x^i,y^a)$ are fibered coordinates on $P$, then $\HH$ is locally spanned by the local vector fields
\[ \prth{\frac{\partial}{\partial x^i}}^h = \frac{\partial}{\partial x^i} + \Gamma^a_i(x,y)\frac{\partial}{\partial y^a} \;; \]
$(\partial/\partial x^i)^h$ is called the horizontal lift of $\partial/\partial x^i$, and $\Gamma^a_i$ are the Christoffel components of the connection.

Assume that $\pi_{P',M}:P'\To M$ and $\pi_{P,M}:P \To M$ are two fibrations with the same base manifold $M$, and that $\Upsilon:P'\To P$ is a surjective submersion (in other words, a fibration as well) preserving the fibrations, say, $\pi_{P,M}\circ\Upsilon=\pi_{P',M}$ (Diagram \ref{fig.preserved.fibration}). Let $\Gamma'$ be a connection in $\pi_{P',M}:P'\To M$ with horizontal projector $\hh$.

\begin{figure}[h]
\[\xymatrix{
  P' \ar[r]^\Upsilon \ar[dr]_{\pi_{P',M}} & P \ar[d]^{\pi_{P,M}} \\
                                          & M
  }\]
\caption{Preserved fibration} \label{fig.preserved.fibration}
\end{figure}

\begin{definition}
$\Gamma'$ is said to be projectable if the subspaces $T\Upsilon(z') (\HH_{z'})$ do not depend on $z'\in \Upsilon^{-1}(\Upsilon(z'))$.
\end{definition}

If $\Gamma'$ is projectable, then we define a connection $\Gamma$ in the fibration $\pi_{P,M}:P\To M$ as follows: The horizontal subspace at $z\in P$ is given by
\[ \bar{\HH}_z = T\Upsilon(z') (\HH_{z'}) \;, \]
for an arbitrary $z'$ in the fibre of $\Upsilon$ over $z$. It is routine to prove that $\bar{\HH}$ defines a horizontal distribution in the fibration $\pi_{P,M}:P\To M$.

We can choose fibered coordinates $(x^i,y^a,z^\alpha)$ on $P'$ such that $(x^i,y^a)$ are fibered coordinates on $P$. The Christoffel components of $\Gamma'$ are obtained by computing the horizontal lift
\[ \prth{\frac{\partial}{\partial x^i}}^h =
\frac{\partial}{\partial x^i} + \Gamma^a_i(x,y,z)\frac{\partial}{\partial y^a} + \Gamma^\alpha_i (x,y,z)\frac{\partial}{\partial z^{\alpha}} \;. \]
A simple computation shows that $\Gamma'$ is projectable if and only if the Christoffel components $\Gamma^a_i$ are constant along the fibres of $\Upsilon$, say $\Gamma^a_i=\Gamma^a_i(x,y)$. In this case, the horizontal lift of $\partial/\partial x^i$ with respect to $\Gamma$ is just
\[ \prth{\frac{\partial}{\partial x^i}}^h = \frac{\partial}{\partial x^i} + \Gamma^a_i(x,y)\frac{\partial}{\partial y^a} \;. \]
As an exercise, the reader can easily check that, conversely, given a connection $\Gamma$ in the fibration $\pi_{P,M}:P\To M$ and a surjective submersion $\Upsilon:P'\To P$ preserving the fibrations, one can construct a connection $\Gamma'$ in the fibration $\pi_{P',M}:P'\To M$ which projects onto $\Gamma$.

The notion of connection in a fibration admits a useful generalization to submanifolds of the total space. Let $\pi_{P,M}:P\To M$ be a fibration and $N$ a submanifold of $P$.

\begin{definition}
A connection in $\pi_{P,M}:P\To M$ along the submanifold $N$ consists of a family of linear mappings
\[ \hh_z:T_zP\To T_zN \]
for all $z \in N$, satisfying the following properties
\[ \hh_z^2=\hh_z, \quad \ker\hh_z= V\pi_{P,M})_z, \]
for all $z\in N$. The connection is said to be {\rm differentiable (flat)} if the distribution $\mathop{\mathrm{im}}\hh\subset TN$ is smooth (integrable).
\end{definition}

We have the following.

\begin{proposition}
Let $\hh$ be a connection in $\pi_{P,M}:P\To M$ along a submanifold $N$ of $P$. Then:
\begin{enumerate}
\item $\pi_{P,M}(N)$ is an open subset of $M$.
\item $(\pi_{P,M})_{|_N}:N\To\pi_{P,M}(N)$ is a fibration.
\item The 1-jet prolongation $J^1(\pi_{P,M})_{|_N}$ is a submanifold of $J^1\pi_{P,M}$.
\item There exists an induced true connection $\Gamma_N$ in the fibration $(\pi_{PM})_{|_N}:N\To\pi_{PM}(N)$ with the same horizontal subspaces.
\item $\Gamma_N$ is flat if and only if $\hh$ is flat.
\end{enumerate}
\end{proposition}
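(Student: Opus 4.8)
The plan is to derive all five assertions from the single pointwise fact that the idempotent $\hh_z\colon T_zP\To T_zN$ splits $T_zP=\operatorname{im}\hh_z\oplus\ker\hh_z=\operatorname{im}\hh_z\oplus(V\pi_{P,M})_z$. First I would record the two consequences of this splitting: $\dim\operatorname{im}\hh_z=\dim T_zP-\dim(V\pi_{P,M})_z=m$, and $\operatorname{im}\hh_z\cap(V\pi_{P,M})_z=\{0\}$; hence $T\pi_{P,M}$ restricts to a linear isomorphism from $\operatorname{im}\hh_z$ onto $T_{\pi_{P,M}(z)}M$. Since $\operatorname{im}\hh_z\subset T_zN$, the tangent map of $(\pi_{P,M})_{|_N}$ is already surjective on $\operatorname{im}\hh_z$, so $(\pi_{P,M})_{|_N}$ is a submersion of constant rank $m$ at every point of $N$. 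Part (1) follows because submersions are open maps, and part (2) is then immediate: a surjective submersion onto the open set $U:=\pi_{P,M}(N)$ is a fibration in the sense used here.

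For part (4) I would take $\bar\HH_z:=\operatorname{im}\hh_z$ as the candidate horizontal subspaces and check that $\bar\HH$ is complementary in $TN$ to the vertical distribution $TN\cap V\pi_{P,M}$ of the fibration of part (2). This is exactly the decomposition $v=\hh_z(v)+(v-\hh_z(v))$ for $v\in T_zN$: the first term lies in $\operatorname{im}\hh_z\subset T_zN$, hence so does the second, while $\hh_z(v-\hh_z(v))=0$ places it in $(V\pi_{P,M})_z$; thus $T_zN=\bar\HH_z\oplus(TN\cap V\pi_{P,M})_z$, with horizontal projector $\hh_z|_{T_zN}$. When $\hh$ is differentiable this distribution is smooth, so it defines a genuine Ehresmann connection $\Gamma_N$ on $(\pi_{P,M})_{|_N}\colon N\To U$, and by construction its horizontal subspaces are precisely those of $\hh$. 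Part (5) is then a tautology combined with the Frobenius theorem: the curvature of $\Gamma_N$ vanishes iff $\bar\HH=\operatorname{im}\hh$ is involutive, which is by definition the statement that $\hh$ is flat.

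I expect part (3) to require the most bookkeeping. I would work in fibred coordinates $(x^i,y^a)$ on $P$ adapted to $N$, i.e.\ $(x^i,\bar y^{\,\bar a},z^\gamma)$ with $N=\{z^\gamma=0\}$ and $(x^i,\bar y^{\,\bar a})$ coordinates on $N$; these induce coordinates $(x^i,\bar y^{\,\bar a},z^\gamma,\bar y^{\,\bar a}_i,z^\gamma_i)$ on $J^1\pi_{P,M}$. A local section of $(\pi_{P,M})_{|_N}$ is precisely a local section $s$ of $\pi_{P,M}$ with $z^\gamma\circ s\equiv 0$, so its $1$-jet satisfies $z^\gamma=0$ and $z^\gamma_i=0$; conversely these equations carve out the image of the natural inclusion $J^1(\pi_{P,M})_{|_N}\To J^1\pi_{P,M}$. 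Since the functions $z^\gamma,z^\gamma_i$ have everywhere independent differentials on $J^1\pi_{P,M}$, their common zero locus is an embedded submanifold, onto which $J^1(\pi_{P,M})_{|_N}$ maps diffeomorphically by the jet-coordinate description above. The one genuine point to verify is \emph{chart independence}: a point of $J^1\pi_{P,M}$ over $z$, regarded as a linear complement $H_z$ to $(V\pi_{P,M})_z$ in $T_zP$, lies in the prolongation iff $z\in N$ and $H_z\subset T_zN$; this invariant characterization makes the local submanifolds glue automatically and completes the argument.
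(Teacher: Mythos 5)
The paper does not actually prove this proposition: its ``proof'' is the single line ``See \cite{LeonMrnMrr96,LeonMrrMrt03}'', so there is no in-paper argument to compare yours against; I can only assess your proof on its own terms, and it is correct and self-contained. Your key observation --- that idempotency of $\hh_z$ together with $\ker\hh_z=(V\pi_{P,M})_z$ forces $T_zP=\operatorname{im}\hh_z\oplus(V\pi_{P,M})_z$, with $\operatorname{im}\hh_z\subset T_zN$ an $m$-dimensional subspace mapped isomorphically onto $T_{\pi_{P,M}(z)}M$ --- yields (1) and (2) at once; note that in this paper ``fibration'' means surjective submersion (the appendix says so explicitly when introducing $\Upsilon$), so no Ehresmann-type local triviality needs to be verified. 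The decomposition $v=\hh_z(v)+\bigl(v-\hh_z(v)\bigr)$ for $v\in T_zN$ correctly exhibits $\operatorname{im}\hh_z$ as a complement to $T_zN\cap(V\pi_{P,M})_z=V\bigl((\pi_{P,M})_{|_N}\bigr)_z$, which is (4), and (5) is immediate because the two horizontal distributions coincide and flatness is integrability in both definitions. Two points are worth making explicit rather than leaving tacit: the fibred chart adapted to $N$ used in (3) exists precisely because $(\pi_{P,M})_{|_N}$ is a submersion, so (3) logically depends on (1)--(2) having been established first (your ordering already respects this); and (4) uses the differentiability of $\hh$ (smoothness of the distribution $\operatorname{im}\hh$), which the proposition assumes implicitly, since otherwise $\Gamma_N$ is only a pointwise-defined connection. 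Neither is a gap.
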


\begin{proof}
See \cite{LeonMrnMrr96,LeonMrrMrt03}.
\end{proof}

\section{Multi-index properties} \label{sec.app.multi-index.properties}
This section is devoted to some simple, but useful, properties of multi-indexes.

\begin{lemma} \label{th.multi-index.fubini}
Let $\set{a_{I,i}}_{I,i}$ be a family of real numbers indexed by a multi-index $I\in\NN^m$ and by an integer $i$ such that $1\leq i\leq m$. Given an integer $l\geq1$, we have that
\begin{equation} \label{eq.multi-index.fubini}
\sum_{|I|=l-1} \sum_{i=1}^m a_{I,i} = \sum_{|J|=l} \sum_{I+1_i=J} a_{I,i}.
\end{equation}
\end{lemma}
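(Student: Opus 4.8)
The plan is to prove the identity \eqref{eq.multi-index.fubini} by showing that both sides of the equation are sums of exactly the same collection of terms $a_{I,i}$, each appearing with the same multiplicity, namely once. Both sums range over pairs $(I,i)$ where $I\in\NN^m$ is a multi-index and $1\leq i\leq m$; the only question is which pairs are included on each side, and whether repetitions occur.

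First I would describe the left-hand side: it ranges over all pairs $(I,i)$ such that $|I|=l-1$ and $1\leq i\leq m$, each such pair contributing $a_{I,i}$ exactly once. Next I would analyze the right-hand side. Here one first fixes a multi-index $J$ with $|J|=l$, and then sums over all pairs $(I,i)$ with $I+1_i=J$; finally one sums over all such $J$. I would observe that the inner condition $I+1_i=J$ forces $I=J-1_i$, which is a valid multi-index precisely when $J(i)\geq1$, and in that case $|I|=|J|-1=l-1$. Thus each pair $(I,i)$ appearing on the right satisfies $|I|=l-1$, so it also appears on the left. Conversely, given any pair $(I,i)$ with $|I|=l-1$, set $J:=I+1_i$; then $|J|=l$ and the pair $(I,i)$ is counted in the inner sum for that $J$. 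So the two index sets coincide as sets of pairs.

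The only subtlety — and the one point worth being careful about — is to check that no pair $(I,i)$ is counted more than once on the right-hand side. This amounts to showing that the map $(I,i)\mapsto(I+1_i,i)$ recovers $J$ and $i$ unambiguously, i.e. that for a fixed $J$ the pairs $(I,i)$ with $I+1_i=J$ are distinct for distinct $i$, and that a given pair $(I,i)$ cannot arise from two different values of $J$. Both are immediate: $J$ is determined by $(I,i)$ as $I+1_i$, and conversely, for fixed $J$, the index $i$ determines $I=J-1_i$ uniquely. Hence the correspondence between the pairs summed on the left and those summed on the right is a bijection, and since the summand $a_{I,i}$ depends only on the pair $(I,i)$, the two sums are equal. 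I do not anticipate any real obstacle here; the argument is a bookkeeping verification, essentially a discrete Fubini-type reindexing, which is why it is isolated as an elementary lemma.
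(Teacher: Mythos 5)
Your argument is correct and is essentially the paper's own proof: the paper simply notes that the two index sets $\set{(I,i) : |I|=l-1,\ 1\leq i\leq m}$ and $\set{(I,i) : I+1_i=J,\ |J|=l}$ are in bijective correspondence, which is exactly the reindexing you spell out. Your version just makes the bijection and the no-double-counting check explicit.
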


\begin{proof}
The proof is trivial when we realize that the sets $\set{(I,i)\ :\ |I|=l-1, 1\leq i\leq m}$ and $\set{(I,i)\ :\ I+1_i=J, |J|=l}$ are in bijective correspondence.
\end{proof}

\begin{lemma} \label{th.multi-index.identity}
Let $J\in\NN^m$ be a fixed multi-index. We have that
\begin{equation} \label{eq.multi-index.identity}
\sum_{I+1_i=J} \frac{I(i)+1}{|I|+1} = 1.
\end{equation}
\end{lemma}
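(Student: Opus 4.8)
The plan is to interpret the summands combinatorially. Fix $J \in \NN^m$ with $|J| = l$. The index set $\{(I,i) : I + 1_i = J\}$ is in bijection with the set of $i \in \{1,\dots,m\}$ such that $J(i) \geq 1$: for each such $i$ we must take $I = J - 1_i$, and then $I(i) + 1 = J(i)$ and $|I| + 1 = |J| = l$. Hence the left-hand side of \eqref{eq.multi-index.identity} rewrites as
\[
\sum_{I+1_i=J} \frac{I(i)+1}{|I|+1} = \sum_{\substack{i=1 \\ J(i)\geq 1}}^{m} \frac{J(i)}{|J|} = \frac{1}{|J|}\sum_{i=1}^m J(i),
\]
where in the last step the terms with $J(i) = 0$ contribute nothing, so the restriction $J(i)\geq 1$ may be dropped. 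Since $\sum_{i=1}^m J(i) = |J|$ by definition of the length of a multi-index, the right-hand side equals $|J|/|J| = 1$, which proves the claim. (If $J = 0$ the sum on the left is empty; but the identity is only used for $|J|\geq 1$, so this degenerate case may be ignored or excluded.)

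I do not expect any genuine obstacle here: the statement is an elementary identity and the only thing to get right is the bookkeeping of the summation index — namely recognizing that $I+1_i = J$ forces $I = J - 1_i$ and is possible precisely when $J(i) \geq 1$, and that the fraction then telescopes to $J(i)/|J|$. The one point worth stating carefully is why the constraint $J(i)\geq 1$ can be relaxed to a full sum over $i$: it is simply because the numerator $J(i)$ vanishes on exactly the omitted terms.
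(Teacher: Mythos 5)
Your argument is correct and is essentially the paper's own proof read in reverse: the paper starts from $1=\sum_{i}J(i)/|J|$, drops the vanishing terms with $J(i)=0$, and rewrites $J(i)/|J|$ as $(I(i)+1)/(|I|+1)$, which is exactly your bijection and telescoping step. Your explicit remarks on why the constraint $J(i)\geq 1$ can be relaxed and on the degenerate case $J=0$ are correct and slightly more careful than the paper's one-line computation.
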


\begin{proof}
\[ 1 = \sum_{i=1}^m \frac{J(i)}{|J|} = \sum_{I+1_i=J} \frac{J(i)}{|J|} = \sum_{I+1_i=J} \frac{I(i)+1}{|I|+1} \]
\end{proof}

\begin{lemma} \label{th.multi-index.lower.sum}
Let $\set{a_J,b^J}_J$ be a family of real numbers indexed by a multi-index $J\in\NN^m$. Given an integer $l\geq1$, we have that
\begin{equation} \label{eq.multi-index.lower.sum}
\sum_{|J|=l} b^Ja_J = \sum_{|I|=l-1} \sum_{i=1}^m \frac{I(i)+1}{|I|+1}(b^{I+1_i}+Q^{I,i})a_{I+1_i},
\end{equation}
where $\set{Q^{I,i}}_{I,i}$ is a family of real numbers such that for any multi-index $J\in\NN^m$ (with $|J|\geq1$) we have that
\begin{equation} \label{eq.multi-index.qs}
\sum_{I+1_i=J} \frac{I(i)+1}{|I|+1}Q^{I,i}=0.
\end{equation}
\end{lemma}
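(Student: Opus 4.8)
The plan is to split the right-hand side into its $b$-part and its $Q$-part, and to show that the first reproduces the left-hand side while the second vanishes, using only the two preceding lemmas (the multi-index Fubini identity \eqref{eq.multi-index.fubini} and the convexity-type identity \eqref{eq.multi-index.identity}). No genuine obstacle is expected; the content is entirely a reindexing argument, and the only care needed is to keep the roles of the pair $(I,i)$ and the multi-index $J=I+1_i$ straight.

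First I would dispose of the $Q$-terms. In the sum on the right of \eqref{eq.multi-index.lower.sum}, collect the contribution
\[
\sum_{|I|=l-1}\sum_{i=1}^m \frac{I(i)+1}{|I|+1}\,Q^{I,i}\,a_{I+1_i}.
\]
Applying Lemma \ref{th.multi-index.fubini} with the family $\set{\frac{I(i)+1}{|I|+1}Q^{I,i}a_{I+1_i}}_{I,i}$ (note that $a_{I+1_i}$ depends on $(I,i)$ only through $J=I+1_i$), this rewrites as
\[
\sum_{|J|=l}\,a_J\sum_{I+1_i=J}\frac{I(i)+1}{|I|+1}\,Q^{I,i},
\]
and each inner sum is zero by hypothesis \eqref{eq.multi-index.qs} (valid since $|J|=l\geq1$). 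Hence the $Q$-terms contribute nothing.

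It then remains to prove the identity without the $Q$'s, namely
\[
\sum_{|J|=l} b^J a_J = \sum_{|I|=l-1}\sum_{i=1}^m \frac{I(i)+1}{|I|+1}\,b^{I+1_i}\,a_{I+1_i}.
\]
Again by Lemma \ref{th.multi-index.fubini}, the right-hand side equals $\sum_{|J|=l} b^J a_J \bigl(\sum_{I+1_i=J}\frac{I(i)+1}{|I|+1}\bigr)$, and by Lemma \ref{th.multi-index.identity} the bracketed sum is exactly $1$ for every $J$. This yields $\sum_{|J|=l} b^J a_J$, completing the argument. The whole proof is thus a two-line combination of the two lemmas just established, and the only place to be attentive is the legitimacy of pulling $a_{I+1_i}$ through the Fubini reindexing, which is immediate since it is a function of $J=I+1_i$ alone.
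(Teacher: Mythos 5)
Your proof is correct and uses exactly the same two ingredients as the paper's (the reindexing Lemma \ref{th.multi-index.fubini} and the identity \eqref{eq.multi-index.identity}, together with the vanishing hypothesis on the $Q$'s); the paper merely runs the computation from left to right, inserting the unit factor $\sum_{I+1_i=J}\frac{I(i)+1}{|I|+1}=1$ and absorbing the $Q$-terms in one step, whereas you go from right to left and split them off first. No substantive difference.
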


\begin{proof}
\begin{eqnarray*}
\sum_{|J|=l} b^Ja_J &=& \sum_{|J|=l} \prth{\sum_{I+1_i=J}  \frac{I(i)+1}{|I|+1}}b^Ja_J\\
                    &=& \sum_{|J|=l} \sum_{I+1_i=J}  \frac{I(i)+1}{|I|+1} (b^{I+1_i}+Q^{I,i})a_{I+1_i}.
\end{eqnarray*}
\end{proof}

% ToDo -----------------------------------------------------------
%\section*{ToDo}
%
%\begin{enumerate}
%\item Definir forma multisimpléctica (quizá en apéndice).
%\item Explicitar el uso de conexiones de Ehresmann en la ecuación dinámica.
%\item Hacer notar que $W_1$ y $W_2$ son siempre subvariedades regulares de $W_0$.
%\item Dar las fibraciones e inyecciones de $W_1$ y $W_2$.
%\item Dar de forma explicita el algoritmo gnh en nuestro caso para entender donde falla.
%\item Mejorar ejemplos.
%\end{enumerate}

% ACKNOWLEDGEMENTS -----------------------------------------------
\section*{Acknowledgements}
This work has been partially supported by the MICINN, Ministerio de Ciencia e Innovaci\'on (Spain), project MTM2007-62478, project ``Ingenio Mathematica'' (i-MATH) No. CSD 2006-00032 (Consolider-Ingenio 2010) and S-0505/ESP/0158 of the CAM (SIMUMAT). The first author (C. C.) also acknowledges the MICINN for an FPI grant.

J. V. is a Postdoctoral Fellow from the Research Foundation -- Flanders (FWO-Vlaan\-de\-ren), and a Fulbright Research Scholar at the California Institute of Technology. Additional financial support from the Fonds Professor Wuytack is gratefully acknowledged.

% BIBLIOGRAPHY ---------------------------------------------------
% Bibliographic styles: amsalpha, amsplain, ieeetr, siam
\bibliographystyle{siam}
%\bibliography{srfhft}

\end{document}